\newcommand{\pp}{\mathbb{P}}
\newcommand{\ee}{\mathbb{E}}
\newcommand{\tsp}{\mathcal{T}'}
\newcommand{\rtsp}{\mathcal{T}}
\newcommand{\pe}{E^{\circ}}
\newcommand{\pf}{A}
\newcommand{\ch}{B}
\newcommand{\var}{\mathbb{V}}
\newcommand{\cov}{Cov}
\newcommand{\epf}{\hfill $\square$}
\newcommand{\rev}[1]{#1}
\newcommand{\FF}{\mbox{${\mathcal F}$}}
\newcommand{\NN}{\mbox{${\mathcal N}$}}
\newcommand{\bu}{{\mathbf u}}
\newcommand{\wconv}{\xrightarrow{~d~} }
\newcommand{\asconv}{\xrightarrow{~a.s.~}}
\newcommand{\rconv}{\xrightarrow{~r~}}
\newcommand{\mconv}{\xrightarrow{~1~}}
\newcommand{\pconv}{\xrightarrow{~p~}}
\newcommand{\udim}{d}  
\newcommand{\mass}{t}
\newcommand{\mds}{\xi}
\newcommand{\trf}{\mathbf{B}} 
\newcommand{\bzero}{\mathbf{0}}
\newcommand{\pev}{s}
\newcommand{\cwt}{\mathbf{w}} 
\newcommand{\egt}{\alpha} 
\newcommand{\aegt}{\beta} 
\begin{document}

\title{On asymptotic joint distributions of cherries and pitchforks for random phylogenetic trees
}

\titlerunning{On joint subtree distributions of random trees}        

\author{Kwok Pui Choi        \and
        Gursharn Kaur \and Taoyang Wu
}


\institute{Kwok Pui Choi and  Gursharn Kaur\at          
             Department of Statistics and Applied Probability, and the Department of Mathematics, National University of Singapore, Singapore 117546 \\
              \email{stackp@nus.edu.sg, stagk@nus.edu.sg}           
           \and
          Taoyang Wu \at
              School of Computing Sciences, University of East Anglia, Norwich, NR4 7TJ, U.K. \\
 \email{ taoyang.wu@uea.ac.uk}
}

\date{Received: date / Accepted: date}

\maketitle
\normalem 
\begin{abstract}
Tree shape statistics provide valuable quantitative insights into evolutionary mechanisms underpinning phylogenetic trees, a commonly used graph representation of evolution systems ranging from viruses to species. By developing limit theorems for a version of extended P\'olya urn models in which negative entries are permitted for their replacement matrices, we present strong laws of large numbers and  central limit theorems for asymptotic joint distributions of two subtree counting statistics, the number of cherries and that of pitchforks, for random phylogenetic trees generated by two widely used null tree models: the proportional to distinguishable arrangements (PDA) and the Yule-Harding-Kingman (YHK) models. Our results indicate that the limiting behaviour of these two statistics, when appropriately scaled, are independent of the initial trees used in the tree generating process. 

\keywords{ tree shape $\cdot$ joint subtree distributions $\cdot$
	P\'olya urn model $\cdot$ limit distributions $\cdot$ 
	 Yule-Harding-Kingman model $\cdot$ PDA model 
}

\end{abstract}

\section{Introduction}

As a common mathematical representation of evolutionary relationships among  biological systems ranging from viruses to species, phylogenetic trees retain important signatures of the underlying evolutionary events and mechanisms which are often not directly observable, such as rates of speciation and expansion~\citep{mooers2007some,heath2008taxon}. 
To utilise these signatures, one popular approach is to compare empirical shape indices computed from trees inferred from real datasets with those predicted by neutral models specifying a tree generating process
~\citep[see, e.g.][]{blum2006random,hagen2015age}. Moreover, topological tree shapes are also informative for understanding several fundamental statistics in population genetics~\citep{ferretti2017decomposing,arbisser2018joint} and important parameters in 
the dynamics of virus evolution and propagation~\citep{colijn2014phylogenetic}. 

Here we will focus on two subtree counting statistics: the number of cherries (e.g.   nodes which have precisely two descendent leaves) and that of pitchforks (e.g.  nodes which have precisely three descendent leaves) in a tree. These statistics are related to monophylogenetic structures in phylogenetic trees~\citep{rosenberg03a} and have been utilised recently to study evolutionary dynamics of pathogens~\citep{colijn2014phylogenetic}. 
Various statistical properties concerning these two statistics have been established for the past decades on the following two fundamental phylogenetic tree sampling models:  the proportional to distinguishable arrangements (PDA) and the Yule-Harding-Kingman (YHK) models~\citep{McKenzie2000,rosenberg06a,chang2010limit, disanto2013exact, WuChoi16,CTW19}.

In this paper we are interested in the limiting behaviour of the joint cherry and pitchfork distributions for the YHK and the PDA models. 
In a seminal paper, \citet{McKenzie2000} showed that cherry distributions converge to a normal distribution, which was later extended to pitchforks and other subtrees by ~\citet{chang2010limit}.  More recently, \citet{Janson2014} studied subtree counts in the random binary search tree model, and their results imply that the cherry and pitchfork distributions converge jointly to a bivariate normal distribution under the YHK model. This is further investigated in~\citet{WuChoi16} and~\citet{CTW19}, where numerical results indicate that convergence to bivariate normal distributions holds under both the YHK model and the PDA model. Our main results here provide a unifying  approach to establishing the convergence of the joint distributions  to bivariate normal distributions for both models, as well as a strong law stating that the joint counting statistics  converge almost surely (a.s.) to a constant vector.  
 
  Our approach is based on a general model in probability theory known as the  P\'olya urn scheme, which has been developed during the past few decades including applications in studying various growth phenomena with an underlying random tree structure (see, e.g.~\citet{Hosam2009} and the references therein). For instance, the results in ~\cite{McKenzie2000} are based on a version of the urn model in which the off-diagonal elements in the replacement matrix are all positive. However, such technical constraints pose a central challenge for studying pitchfork distributions as negative entries in the resulting replacement matrix are not confined only to the diagonal (see Sections~\ref{sec:results_yhk} and~\ref{sec:results_pda}). To overcome this limitation, here we study  a family of extended P\'olya urn models under certain technical assumptions in which negative entries are allowed for their replacement matrices (see Section~\ref{sec:urn}). Inspired by the martingale approach used in~\cite{BaiHu2005}, we present a self-contained proof for the limit theorems for this extended urn model, with the dual aims of completeness and accessibility. 
Note that our approach is  different from one popular framework in which discrete urn models are embedded into a continuous Markov chain known as the branching processes (see, e.g.~\citet{Janson2004} for some recent developments).


We now summarize the contents of the rest of the paper. In the next section, we collect some definitions concerning phylogenetic trees and the two tree-based Markov processes. Then, in Section~\ref{sec:urn}, we present an introduction to the urn model and a version of the Strong Law of Large Numbers and the Central Limit Theorem that are applicable to our study. Using these two theorems, we present our results for  the YHK process in Section~\ref{sec:results_yhk}, and those for  the PDA process in Section~\ref{sec:results_pda}. These results are extended to unrooted trees in Section~\ref{sec:unrooted}. 
 The proofs of the main results for the urn model are presented in Section~\ref{sec:tech:proofs}, with a technical lemma included in the appendix. We conclude in the last section with a discussion of our results and some open problems.

\section{Preliminaries} \label{sec:preliminaries}

In this section, we present some basic notation and background concerning phylogenetic trees, random tree models, and urn models. From now on $n$ will be a positive integer greater than two unless stated otherwise.

\subsection{Phylogenetic Trees}
 A {\em tree} $T=\left( V(T),E(T) \right)$ is a connected acyclic graph with vertex set $V(T)$ and edge set $E(T)$. A vertex is referred to as a {\em leaf} if it has degree one, and an {\em interior vertex} otherwise.  An edge incident to a leaf is called a {\em pendant edge}, and let $\pe(T)$ be the set of pendant edges in $T$.
  A tree is {\em rooted} if it contains exactly one distinguished degree one node designated as the {\em root}, which is not regarded as a leaf and is usually denoted by $\rho$, and {\em unrooted} otherwise.  Other than those in Section~\ref{sec:unrooted},  all trees considered here are rooted and  {\em binary}, that is, each interior vertex has precisely two children. 

 \begin{figure}[ht]
\begin{center}
{\includegraphics[scale=0.8]{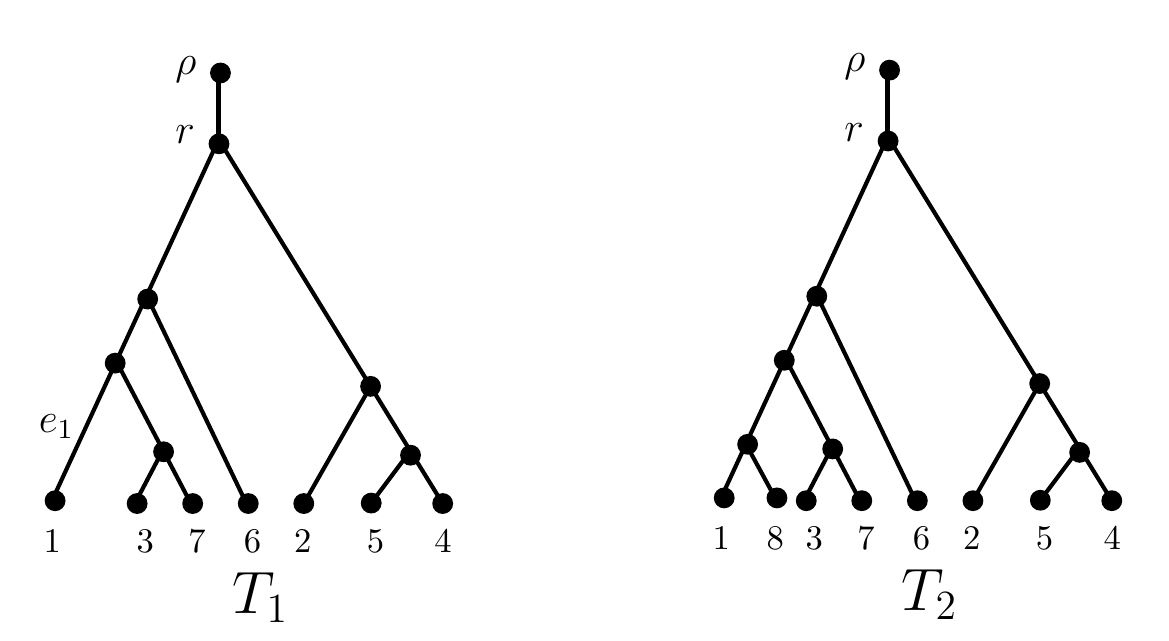}}
\end{center}
\caption{Examples of phylogenetic trees.  $T_1$ is a rooted phylogenetic tree on $\{1,\dots,7\}$; $T_2=T_1[e_{1}]$ is a phylogenetic tree on $X=\{1,\dots,8\}$ obtained from $T_1$ by attaching a new leaf labelled $8$ to the edge $e_{1}$ which is incident with taxon $1$ in $T_1$. 
}
\label{fig:tree_examples}
\end{figure}

 A {\em phylogenetic tree} on a finite set $X$ is a rooted  tree with leaves bijectively labelled by the elements of $X$. The set of binary rooted phylogenetic trees on $\{1,2,\dots,n\}$ is denoted by $\rtsp_n$. 
See Fig.~\ref{fig:tree_examples} for examples of trees in $\rtsp_7$ and  $\rtsp_8$. Given an edge $e$ in a phylogenetic tree $T$ on $X$ and a taxon $x' \not \in X$, let $T[e;x']$ be the phylogenetic tree on $X\cup \{x'\}$ obtained by attaching a new leaf with label $x'$ to the edge $e$. Formally, let $e=(u,v)$ and let $w$ be a vertex not contained in $V(T)$. Then $T[e;x']$ has vertex set $V(T) \cup \{x',w\}$ and edge set $\big(E(T) \setminus \{e\} \big) \cup \{(u,w), (v,w), (w,x')\}$. See Fig.~\ref{fig:tree_examples} for an illustration of this construction, \rev{where tree $T_2=T_1[e_{1};8]$ is obtained from $T_1$ by attaching leaf $8$ to the edge $e_{1}$}. Note that we also use $T[e]$ instead of $T[e;x']$ when the taxon name $x'$ is not essential.

Removing an edge in a phylogenetic tree $T$ results in two connected components; the connected component that does not contain the root of $T$ is referred to as a subtree of $T$. A subtree is called a {\em cherry} if it has two leaves, and a {\em pitchfork} if it has three leaves.  Given a phylogenetic tree $T$, let $\pf(T)$ and $\ch(T)$ be the number of  pitchforks and cherries contained in $T$. For example, in Fig.~\ref{fig:tree_examples} we have $\pf(T_2)=1$ and $\ch(T_2)=3$.

\subsection{The YHK and the PDA Processes}
\label{subsection:model}
Let $\rtsp_n$ be the set of  phylogenetic trees with $n$ leaves.
In this subsection, we introduce  the two tree-based Markov processes investigated in this paper:  the  proportional to distinguishable arrangements (PDA) process  and the  
Yule-Harding-Kingman (YHK) process, which is largely based on~\citet{CTW19} and adapted from  the Markov processes as described in~\citet[Section 3.3.3]{steel2016phylogeny}.

Under the YHK process~\citep{yule25a, harding71a}, starting with a given tree $T_m$  in $\rtsp_m$ with $m\ge 2$, a  random phylogenetic tree $T_n $ in $\rtsp_n$ is generated as follows.
\begin{itemize}
\item[(i)] Select a uniform random permutation $(x_1,\dots,x_n)$ of $\{1,2,\dots,n\}$;
\item[(ii)] label the leaves of the rooted phylogenetic tree $T_m$ randomly using the taxon set $\{x_1,x_2,\cdots,x_m\}$;
\item[(iii)] for $m\le k <n$, uniformly choose a random pendant edge $e$ in $T_k$ and let 
$T_{k+1}=T_k[e;x_{k+1}]$.
\end{itemize}
Here a permutation $(x_1,\dots,x_n)$ of $\{1,2,\dots,n\}$ means a taxon sequence with $x_i \in \{1,2,\dots,n\}$ and $x_i\not =x_j$ for all $i\not =j$. 
The PDA process can be described using a similar scheme; the only difference is that in Step (iii) the edge $e$ is uniformly sampled from the edge set of $T_k$, instead of the pendant edge set. Furthermore, under the PDA process, Step (i) can also be simplified by using a fixed permutation, say $(1,2,\cdots,n)$.
In the literature, the special case $m=2$, for which $T_2$ is the unique tree with two leaves, is also referred to as the YHK model and the PDA model, respectively.


For $n\ge 4$, let $\pf_n$ and $\ch_n$  be the random variables $\pf(T)$ and  $\ch(T)$, respectively, for a random tree $T$ in $\rtsp_n$. The probability distributions of $\pf_n$ (resp. $\ch_n$) will be referred to as pitchfork distributions (resp. cherry distributions). 
In  this paper, we are mainly interested in the limiting  distributional properties of $(\pf_n, \ch_n)$.

\subsection{Modes of Convergence}

Let $X, X_1,X_2,\dots$ be random variables on some probability space $(\Omega,\FF,\pp)$. To study the urn model we will use the following four modes of convergence~(see, e.g.~\citet[Section 7.2]{grimmettprobability} for more details). First,  $X_n$ is said to converge to $X$ {\em almost surely}, denoted as $X_n \asconv X$, if $\{\omega \in \Omega\,:\, X_n(\omega)\to X(\omega)~\mbox{as}~n \to \infty \}$ is an event with probability 1. Next, $X_n$ is said to converge to $X$  {\em in $r$-th mean}, where $r\ge 1$, written $X_n \rconv X$, if $\ee(|X_n^r|)<\infty$ for all $n$ and $\ee(|X_n-X|^r) \to 0$ as $n\to \infty$. 
Furthermore, $X_n$ is said to converge to $X$  {\em in probability}, written $X_n \pconv X$, if $\pp(|X_n-X| > \epsilon) \to 0$ as $n\to \infty$ for all $\epsilon>0$. 
Finally, $X_n$ converges to a random variable $Y$ {\em in distribution}, also termed {\em weak convergence} or {\em convergence in law} and written $X_n \wconv Y$, if $\pp(X_n \le x) \to \pp(Y \le x)$ as $n\to \infty$ for all points $x$ at which the distribution function $\pp(Y\le x)$ is continuous. Note that 
$X_n \pconv X$ implies $X_n \wconv X$, and $X_n \pconv X$ holds if either 
 $X_n \asconv X$ holds or $X_n\rconv X$ holds for some $r\ge 1$. 

\subsection{Miscellaneous}

Let $\bzero=(0,\dots,0)$ be the $\udim$-dimensional zero row vector.
Let $\mathbf{e}=(1,\dots,1)$ be the $\udim$-dimensional row vector whose entries are all one, and for $1 \le j \le d$,  let
$ {\mathbf e}_j$ denote  the $j$-th canonical row vector whose $j$-th entry is $1$ while the other entries are all zero.

Let $\text{diag}(a_1,\dots,a_d)$ denote a diagonal matrix whose diagonal elements are $a_1, \ldots , a_d$. Furthermore, $\bzero^\top\bzero$ is the $\udim\times \udim$ matrix whose entries are all zero.
 Here $Z^\top$ denotes the transpose of $Z$, where $Z$ can be either a vector or a matrix.

\section{Urn Models}
\label{sec:urn}


In this section, we briefly recall the classical P\'olya urn model and some of its generalisations. The P\'olya urn model was first studied by  \cite{Polya1930}, and since then it has been applied in describing evolutionary processes in biology and  computer science. Several such applications in genetics  are discussed in \citet[Chapter 5]{Kotz1977} and in~\citet[Chapters 8 and 9]{Hosam2009}.
In a general setup, consider an urn with balls of $\udim$ different colours containing $C_{0,i}$ many balls of colour $i \in \{1,2,\dots, \udim\}$ at time $0$.  At each time  step,  a ball is drawn uniformly at random and returned with some extra balls, depending on the colour selected.
The reinforcement scheme is often described by a $\udim \times \udim$ matrix $R$:  if the colour of the ball drawn is  $i$, then we return the selected ball along with  adding or removing $R_{ij} $ many balls of colour $j$, for every $j \in \{ 1,2,\cdots, \udim\}$. A negative value of $R_{ij}$ corresponds to removing $|R_{ij}|$ many balls from the urn.  Such a matrix is termed as {\em replacement matrix} in the literature. 
For instance,  the replacement matrix $R$ is the identity matrix  for  the original P\'olya urn model with $d$ colours,  that is,  at each time point, the selected ball is returned with one additional ball of the same colour.

Let $C_n = (C_{n,1}, \dots, C_{n,\udim})$  be the row vector of dimension $\udim$ that represents the ball configuration at time $n$ for an urn model with $\udim$ colours. Then the  sum of $C_{n,i}$, denoted by $\mass_n$, is the number of balls in the urn at time $n$. Recall that a vector is referred to as a {\em stochastic vector} if each entry in the vector is a non-negative real number and the sum of its entries is one.  Denote the stochastic vector associated with $C_n$ by $\widetilde{C}_n$, that is, we have $\widetilde{C}_{n,i}=C_{n,i}/\mass_n$ for $1\le i \le \udim$.

 Let $\FF_{n}$ be the information of the urn's configuration from time $1$ up to $n$, that is, the $\sigma$-algebra generated by $C_0,C_1,\cdots,C_n$.
 Let $R$ denote the  replacement matrix. Then, for every $n\geq 1$,
 \begin{equation}
  \label{eq:main:urn:dynamic}
C_{n} = C_{n-1} +\chi_{n} R, 
 \end{equation}
where $\chi_{n} $ is a  random  row vector of length $d$ such that for $i=1, \ldots, d$,
\[\pp(\chi_{n} = {\mathbf e}_i \vert \FF_{n-1}) =
\widetilde{C}_{n-1,i}.
\]
Since precisely one entry in $\chi_{n}$ is $1$ and all others are $0$, it follows that
 \begin{equation}
 \label{eq:chi:prop}
\ee[\chi_n\vert \FF_{n-1}]=\widetilde{C}_{n-1}
~\quad~\mbox{and}~\quad~
\ee[\chi_n^\top\chi_n\vert \FF_{n-1}]=\text{diag}( \widetilde{C}_{n-1}).
 \end{equation}

\vspace{1em}

We state  the following  assumptions about the replacement matrix $R$:
\begin{enumerate}
\item [(A1)] {\em  Tenable:} It is always possible to draw balls and follow the replacement rule, that is, we never get stuck in following the rules (see, e.g.~\citet[p.46]{Hosam2009}).
\item [(A2)] {\em Small:}
All eigenvalues of $R$ are real; the maximal eigenvalue $\lambda_1=s$ is positive with $\lambda_1>2\lambda$ holds for all other eigenvalues $\lambda$ of $R$.
\item [(A3)]  {\em Strictly Balanced:}
The column vector $\mathbf{e}^\top$ is a right eigenvector of $R$ corresponding to $\lambda_1$ and one of the left eigenvectors corresponding to $\lambda_1$ is a stochastic vector.
\item[(A4)] {\em Diagonalisable:} $R$ is diagonisable over real numbers. That is, there exists \rev{an invertible} matrix $U$ with real entries such that
\begin{equation}
\label{eq:R:diagonal}
U^{-1}RU  = \text{diag}(\lambda_1, \lambda_2,\dots, \lambda_d) =: \Lambda,
\end{equation}
where $\lambda_1\ge \lambda_2 \ge \dots \ge \lambda_d$ are all eigenvalues of $R$.
\end{enumerate}
\vspace{1em}

Note that under assumption (A3) we have $t_n = t_0+ns$, which implies that the urn model is {\em balanced}, as commonly known in the literature. For the matrix $U$ in (A4) and $1\le j \le \udim$,  let $\mathbf{u}_j=U{\mathbf e}^\top_j$ denote the $j$-th column of $U$, and $\mathbf{v}_j={\mathbf e}_jU^{-1}$ the $j$-th row of $U^{-1}$. Then $\mathbf{u}_j$ and $\mathbf{v}_j$ are, respectively, right and left eigenvectors corresponding to $\lambda_j$. Furthermore, since
$\mathbf{v}_i \mathbf{u}_j={\mathbf e}_iU^{-1}   U{\mathbf e}^\top_j
={\mathbf e}_i \mathbf{I}\, {\mathbf e}^\top_j$, where $\mathbf{I} $ is the identity matrix, 
we have
\begin{equation}
\label{eq:u:v:innerproduct}
\mathbf{v}_i \mathbf{u}_j=1~~\mbox{if $i=j$, and }
\mathbf{v}_i \mathbf{u}_j=0~~\mbox{if $i\not =j$.}
\end{equation}
In view of (A3), (A4) and~\eqref{eq:u:v:innerproduct},  for simplicity the following convention will be used throughout this paper:
\begin{equation}
\label{eq:p-evector}
\mathbf{u}_1=\mathbf{e}^\top
~\quad \mbox{and}~~\quad \mathbf{v}_1~\mbox{is a stochastic vector}.
\end{equation}
Furthermore, the eigenvalue $\lambda_1$ will be referred to as the principal eigenvalue; $\mathbf{u}_1$ and $\mathbf{v}_1$ specified in ~\eqref{eq:p-evector} as the principal right and principal left eigenvector, respectively.


The limit of the urn process and the  rate of convergence  to the limiting vector  depends on the spectral properties of matrix $R$.   Theorems \ref{thm1} and \ref{thm2} below give  the Strong Law of Large Numbers and the Central Limit Theorem of the extended P\'olya urn model under our assumptions (A1)--(A4). 
Our proofs,  which are  adapted from that of \cite{BaiHu2005},  will be presented in Section~\ref{sec:tech:proofs}\,.


\begin{theorem} \label{thm1}
Under assumptions  {\em (A1)--(A4)}, we have
\begin{equation}
\label{eq:asconv:urn}
(n\pev)^{-1}  C_n \asconv  \mathbf{v}_1
~\quad~\mbox{and} ~\quad~
(n\pev)^{-1}  C_n \rconv  \mathbf{v}_1
~\quad~\mbox{for $r >0$},
\end{equation}
where $\pev$ is the principal eigenvalue and $\mathbf{v}_1$ is the principal left eigenvector.
\end{theorem}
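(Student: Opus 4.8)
The plan is to solve the stochastic recursion~\eqref{eq:main:urn:dynamic} in closed form and then split $C_n$ into a deterministic leading term plus a martingale remainder, which I handle in the eigenbasis of $R$. First I would rewrite the dynamics as a linear recursion with martingale-difference noise: using $\ee[\chi_n\mid\FF_{n-1}]=\widetilde{C}_{n-1}=t_{n-1}^{-1}C_{n-1}$ and, by (A3), $t_n=t_0+n\pev$, one obtains
\[
C_n=C_{n-1}\bigl(\mathbf{I}+t_{n-1}^{-1}R\bigr)+\Delta M_n,\qquad \Delta M_n:=(\chi_n-\widetilde{C}_{n-1})R ,
\]
with $\ee[\Delta M_n\mid\FF_{n-1}]=\bzero$. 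Iterating yields
\[
C_n=C_0\,\Pi_{1,n}+\sum_{k=1}^n\Delta M_k\,\Pi_{k+1,n},\qquad \Pi_{a,b}:=\prod_{j=a}^{b}\bigl(\mathbf{I}+t_{j-1}^{-1}R\bigr),
\]
with the convention $\Pi_{a,b}=\mathbf{I}$ when $a>b$.

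Next I would exploit (A4). Since $U^{-1}RU=\Lambda$ is diagonal, the factors $\mathbf{I}+t_{j-1}^{-1}R$ are simultaneously diagonalised, so for the right eigenvector $\mathbf{u}_i=U\mathbf{e}_i^\top$ one has $\Pi_{a,b}\mathbf{u}_i=\bigl(\prod_{j=a}^{b}(1+\lambda_i/t_{j-1})\bigr)\mathbf{u}_i$. Writing $\gamma_m^{(i)}:=\prod_{j=1}^{m}(1+\lambda_i/t_{j-1})$, an elementary estimate (comparing $\log\gamma_m^{(i)}$ with $\sum_{j\le m}\lambda_i/(t_0+(j-1)\pev)$) gives $\gamma_m^{(1)}=t_m/t_0$ exactly and $|\gamma_m^{(i)}|\asymp m^{\lambda_i/\pev}$ with $\lambda_i/\pev<\tfrac12<1$ for $i\ge2$, by (A2). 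Projecting the explicit formula for $C_n$ onto $\mathbf{u}_i$ gives
\[
C_n\mathbf{u}_i=\gamma_n^{(i)}(C_0\mathbf{u}_i)+\gamma_n^{(i)}\,S_n^{(i)},\qquad S_n^{(i)}:=\sum_{k=1}^{n}\frac{\Delta M_k\mathbf{u}_i}{\gamma_k^{(i)}} .
\]
For $i=1$, since $\mathbf{u}_1=\mathbf{e}^\top$ and $R\mathbf{e}^\top=\pev\,\mathbf{e}^\top$, we have $C_0\mathbf{u}_1=t_0$ and $\Delta M_k\mathbf{u}_1=\pev(\chi_k-\widetilde{C}_{k-1})\mathbf{e}^\top=0$, hence $C_n\mathbf{u}_1=t_n$ and $(n\pev)^{-1}C_n\mathbf{u}_1\to1$.

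For $i\ge2$ the sequence $S_n^{(i)}$ is a martingale (the $\gamma_k^{(i)}$ being deterministic), and $|\Delta M_k\mathbf{u}_i|=|\lambda_i|\,|(\chi_k-\widetilde{C}_{k-1})\mathbf{u}_i|$ is uniformly bounded since $\chi_k$ is a canonical vector and $\widetilde{C}_{k-1}$ is stochastic; hence its conditional quadratic variation is $\sum_{k\le n}\ee\bigl[(\Delta M_k\mathbf{u}_i/\gamma_k^{(i)})^2\mid\FF_{k-1}\bigr]=O\bigl(\sum_{k\le n}k^{-2\lambda_i/\pev}\bigr)=O(n^{1-2\lambda_i/\pev})$. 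A strong law for martingales (the technical lemma in the appendix) then gives $S_n^{(i)}=o(n^\delta)$ a.s. for any $\delta>\max\{0,\tfrac12-\lambda_i/\pev\}$, and choosing in addition $\delta<1-\lambda_i/\pev$ (possible since $\lambda_i/\pev<1$) shows $(n\pev)^{-1}C_n\mathbf{u}_i=O(n^{\lambda_i/\pev-1})+o(n^{\lambda_i/\pev-1+\delta})\to0$ a.s. Reassembling, $(n\pev)^{-1}C_nU\to\mathbf{e}_1$ a.s., so multiplying by the fixed matrix $U^{-1}$ gives $(n\pev)^{-1}C_n\to\mathbf{e}_1U^{-1}=\mathbf{v}_1$ a.s. For $r$-th mean convergence, note that by tenability (A1) every coordinate satisfies $0\le C_{n,i}\le t_n=t_0+n\pev$, so $(n\pev)^{-1}C_n$ is uniformly bounded and the a.s. convergence upgrades to $\rconv$ for every $r>0$ by bounded convergence (alternatively, bound $\ee|S_n^{(i)}|^r$ by $(\ee|S_n^{(i)}|^2)^{r/2}$ for $0<r\le2$ and by Burkholder's inequality for $r>2$).

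I expect the main obstacle to be controlling the martingale remainders $S_n^{(i)}$ for $i\ge2$: under (A2) one has $\lambda_i/\pev<\tfrac12$, so the conditional variances $(\gamma_k^{(i)})^{-2}\asymp k^{-2\lambda_i/\pev}$ need not be summable and $S_n^{(i)}$ need not converge; one must instead invoke a martingale strong law valid under divergent quadratic variation and match the growth exponent $1-2\lambda_i/\pev$ of the quadratic variation against the normalising exponent $1-\lambda_i/\pev$. It is precisely assumption (A2) that makes these exponents compatible (and, for the central limit theorem, makes this remainder negligible on the $\sqrt{n}$ scale).
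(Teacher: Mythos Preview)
Your approach is essentially the paper's: both diagonalise $R$, split $C_n$ (equivalently $Y_n=C_nU$) into a deterministic leading piece plus martingale-difference remainders in the eigenbasis, handle the principal direction exactly via $C_n\mathbf{u}_1=t_n$, and control each subordinate direction by a second-moment martingale argument; the $L^r$ statement is obtained in both by uniform boundedness of $(n\pev)^{-1}C_n$. The one packaging difference is that you factor out $\gamma_n^{(i)}$ so that $S_n^{(i)}$ is a bona fide martingale and then invoke an off-the-shelf martingale SLLN, whereas the paper keeps the triangular array $Z_{n,j}=\sum_{k\le n} b_{n,k}(j)\,\mds_k\mathbf{e}_j^\top$ (with $b_{n,k}(j)=\gamma_n^{(j)}/\gamma_k^{(j)}$), shows $\ee[Z_{n,j}^2]\le Kn$, and derives a.s.\ convergence by hand via Chebyshev along the subsequence $n^2$, Borel--Cantelli, and interpolation using the bounded one-step increments.

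Two caveats on your write-up. First, ``the technical lemma in the appendix'' is not a martingale strong law: the appendix lemma only records the product estimates $|b_{n,k}(j)|\le K(n/k)^{\lambda_j/\pev}$ and the limit $\tfrac1n\sum_k b_{n,k}(i)b_{n,k}(j)\to \pev/(\pev-\lambda_i-\lambda_j)$; you need to cite a standard martingale SLLN (Chow, or Hall--Heyde) instead. Second, your normalisation $S_n^{(i)}=\sum_{k}\Delta M_k\mathbf{u}_i/\gamma_k^{(i)}$ can break down: for $\lambda_i<0$ some early factor $1+\lambda_i/t_{k-1}$ may vanish (and does, e.g., in the YHK application with $t_0=2$, $\lambda=-2$), making $\gamma_k^{(i)}=0$ for all larger $k$. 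The paper's triangular-array formulation with $b_{n,k}(j)$ sidesteps this; in your formulation the fix is to start the martingale at a fixed time $k_0$ beyond which all factors are positive, which does not affect the asymptotics.
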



\medskip

Let  $\mathcal{N}(\mathbf{0}, \Sigma)$ be the multivariate normal distribution with mean vector $\mathbf{0}$ and  covariance matrix $\Sigma$.

\begin{theorem} \label{thm2}
Under assumptions  {\em (A1)--(A4)},  we have
$$
n^{-1/2} ( C_n - n\pev \mathbf{v}_1) \wconv \NN(\mathbf{0}, \Sigma),
$$
where $\pev$ is the principal eigenvalue, $\mathbf{v}_1$ is the principal left eigenvector, and
\begin{equation}
\Sigma =  \sum_{i,j=2}^d \frac{\pev \lambda_i \lambda _j  \bu_i^\top \mbox{\em diag}(\mathbf{v}_1) \bu_j  }{\pev-\lambda_i -\lambda_j}  \mathbf{v}_i^\top \mathbf{v}_j.
\end{equation}
\end{theorem}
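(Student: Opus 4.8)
The plan is to reduce this ($d$-dimensional) central limit theorem to a one-dimensional martingale CLT, via the spectral decomposition of $R$ together with the Cram\'er--Wold device; this is the route indicated by the reference to \cite{BaiHu2005}.

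\emph{Step 1: spectral reduction.} By \eqref{eq:u:v:innerproduct} one has $\sum_{j=1}^{d}\bu_j\mathbf{v}_j = U U^{-1} = \mathbf I$, so $C_n = \sum_{j=1}^{d}(C_n\bu_j)\mathbf{v}_j$. Since $\bu_1=\mathbf e^\top$ and the urn is balanced, $C_n\bu_1 = C_n\mathbf e^\top = \mass_n = \mass_0+n\pev$, whence
\[
C_n - n\pev\,\mathbf{v}_1 \;=\; \mass_0\,\mathbf{v}_1 \;+\; \sum_{j=2}^{d} Z_{n,j}\,\mathbf{v}_j, \qquad Z_{n,j}:=C_n\bu_j .
\]
As $\mass_0\mathbf{v}_1$ is a fixed vector, it suffices to show that $n^{-1/2}(Z_{n,2},\dots,Z_{n,d})$ converges in law to a centred Gaussian vector $(\zeta_2,\dots,\zeta_d)$ with $\cov(\zeta_i,\zeta_j)=\pev\lambda_i\lambda_j\,\bu_i^\top\text{diag}(\mathbf{v}_1)\bu_j/(\pev-\lambda_i-\lambda_j)$; the stated $\Sigma$ is then just the covariance matrix of $\sum_{j=2}^{d}\zeta_j\mathbf{v}_j$, namely $\sum_{i,j=2}^{d}\cov(\zeta_i,\zeta_j)\,\mathbf{v}_i^\top\mathbf{v}_j$.

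\emph{Step 2: a martingale in each eigendirection.} From \eqref{eq:main:urn:dynamic} and $R\bu_j=\lambda_j\bu_j$ we get $Z_{n,j}=Z_{n-1,j}+\lambda_j\,\chi_n\bu_j$, and by \eqref{eq:chi:prop}, $\ee[\chi_n\bu_j\mid\FF_{n-1}]=\widetilde C_{n-1}\bu_j=Z_{n-1,j}/\mass_{n-1}$, so $\ee[Z_{n,j}\mid\FF_{n-1}]=(1+\lambda_j/\mass_{n-1})Z_{n-1,j}$. Setting $a_{n,j}=\prod_{k=0}^{n-1}(1+\lambda_j/\mass_k)$, the quantity $M_{n,j}:=Z_{n,j}/a_{n,j}$ is an $(\FF_n)$-martingale with increments $\Delta M_{n,j}=\tfrac{\lambda_j}{a_{n,j}}\bigl(\chi_n\bu_j-\widetilde C_{n-1}\bu_j\bigr)$. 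Since $\mass_k=\mass_0+k\pev$, a Gamma-function estimate (the technical lemma relegated to the appendix) gives $a_{n,j}\sim c_j n^{\lambda_j/\pev}$ with $c_j\neq0$; by (A2), $\lambda_j<\pev/2$ for $j\ge2$, so $a_{n,j}=o(\sqrt n)$, which makes the deterministic term $n^{-1/2}a_{n,j}M_{0,j}$ negligible.

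\emph{Step 3: triangular-array CLT and conclusion.} Fix $\theta=(\theta_2,\dots,\theta_d)\in\mathbb{R}^{d-1}$ and put $\eta_{n,k}:=n^{-1/2}\sum_{j=2}^{d}\theta_j a_{n,j}\,\Delta M_{k,j}$ for $1\le k\le n$; for each $n$ this is a martingale-difference array for $(\FF_k)$ and $\sum_{k=1}^{n}\eta_{n,k}=n^{-1/2}\sum_{j=2}^{d}\theta_j Z_{n,j}+o(1)$. Boundedness of $|\chi_k\bu_j|$, combined with $a_{n,j}\sim c_j n^{\lambda_j/\pev}$ and $\lambda_j/\pev<1/2$, yields the conditional Lyapunov (hence Lindeberg) condition $\sum_k\ee[|\eta_{n,k}|^{2+\delta}\mid\FF_{k-1}]\to0$. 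For the conditional variance, \eqref{eq:chi:prop} gives
\[
\ee[\Delta M_{k,i}\Delta M_{k,j}\mid\FF_{k-1}]=\frac{\lambda_i\lambda_j}{a_{k,i}a_{k,j}}\Bigl(\bu_i^\top\text{diag}(\widetilde C_{k-1})\bu_j-(\widetilde C_{k-1}\bu_i)(\widetilde C_{k-1}\bu_j)\Bigr),
\]
and by Theorem~\ref{thm1}, $\widetilde C_{k-1}\to\mathbf{v}_1$ a.s.; since $\mathbf{v}_1\bu_i=0$ for $i\ge2$ by \eqref{eq:u:v:innerproduct}, the bracket converges a.s. to $\bu_i^\top\text{diag}(\mathbf{v}_1)\bu_j$. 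Because $(a_{k,i}a_{k,j})^{-1}\sim(c_ic_j)^{-1}k^{-(\lambda_i+\lambda_j)/\pev}$ with exponent $<1$ (as $\lambda_i+\lambda_j<\pev$ by (A2)), the partial sums grow like $n^{1-(\lambda_i+\lambda_j)/\pev}$, so a Toeplitz/Kronecker averaging argument yields
\[
\frac{a_{n,i}a_{n,j}}{n}\sum_{k=1}^{n}\ee[\Delta M_{k,i}\Delta M_{k,j}\mid\FF_{k-1}]\;\asconv\;\frac{\pev\,\lambda_i\lambda_j\,\bu_i^\top\text{diag}(\mathbf{v}_1)\bu_j}{\pev-\lambda_i-\lambda_j}=:\Sigma'_{ij}.
\]
Multiplying by $\theta_i\theta_j$ and summing identifies $\lim_n\sum_{k=1}^{n}\ee[\eta_{n,k}^2\mid\FF_{k-1}]=\theta\Sigma'\theta^\top$; the martingale central limit theorem for triangular arrays then gives $\sum_k\eta_{n,k}\wconv\NN(0,\theta\Sigma'\theta^\top)$, hence $n^{-1/2}(Z_{n,2},\dots,Z_{n,d})\wconv\NN(\mathbf 0,\Sigma')$ by Cram\'er--Wold, and Step~1 produces the asserted limit with $\Sigma=\sum_{i,j=2}^{d}\Sigma'_{ij}\,\mathbf{v}_i^\top\mathbf{v}_j$.

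\emph{Main obstacle.} The delicate part is the bookkeeping in Step 3: one must pin down the precise asymptotics $a_{n,j}\sim c_j n^{\lambda_j/\pev}$ and then verify that the polynomial factors $n^{(\lambda_i+\lambda_j)/\pev}$ and $n^{1-(\lambda_i+\lambda_j)/\pev}$ cancel to leave exactly the constant $\pev\lambda_i\lambda_j\bu_i^\top\text{diag}(\mathbf{v}_1)\bu_j/(\pev-\lambda_i-\lambda_j)$, uniformly over the possibly negative eigenvalues $\lambda_j$. This is precisely where the smallness condition (A2) (so that $\lambda_i+\lambda_j<\pev$, the sums diverge, and the limiting denominator is positive) and strict balance (A3) (so that $\mathbf{v}_1\bu_j=0$, which kills the otherwise troublesome $(\widetilde C_{k-1}\bu_j)^2$ term in the limit) enter. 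A secondary, more routine point is the passage from the conditional forms of the variance and Lindeberg conditions to the precise hypotheses of the triangular-array martingale CLT.
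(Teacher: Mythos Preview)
Your proof is correct and follows essentially the same route as the paper: project onto eigendirections via $Y_n=C_nU$, form a martingale-difference triangular array weighted by the products $b_{n,k}(j)=\prod_{\ell=k}^{n-1}(1+\lambda_j/t_\ell)$ (your $a_{n,j}/a_{k,j}$), verify conditional-variance convergence using Theorem~\ref{thm1} together with a Toeplitz/Riemann-sum argument (this is exactly the paper's Corollary~\ref{cor:conv:array:product} and Lemma~\ref{lem:entries:B}), check Lindeberg from boundedness of the increments, and conclude via Cram\'er--Wold and the martingale CLT. The only minor caveat is that your normalized martingale $M_{n,j}=Z_{n,j}/a_{n,j}$ tacitly assumes no factor $1+\lambda_j/t_k$ vanishes; the paper's formulation via $b_{n,k}(j)$ sidesteps this degenerate case (see the case analysis in the appendix proof of Lemma~\ref{lem1}), but the fix is cosmetic.
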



\section{Limiting Distributions under the YHK Model}
\label{sec:results_yhk}
A cherry is said to be {\em independent} if it is not contained in any pitchfork, and {\em dependent} otherwise. Similarly, a pendant edge is {\em independent} if it is contained in neither a pitchfork nor a cherry.
In this section, we study the limiting joint distribution of the random variables $A_n$ (i.e., the number of pitchforks) and $B_n$ (i.e., the number of cherries) under the YHK model.


To study the joint distribution of cherries and pitchforks, we extend the urn models used in~\citet{McKenzie2000} (see also \citet[Section 3.4]{steel2016phylogeny}) as follows. Each pendant edge in a phylogenetic tree is designated as one of the following four types:
\begin{itemize}
	\item[(E1):]  a type $1$ edge is a pendant edge in a dependent cherry (i.e, contained in both a cherry and a pitchfork);
	\item[(E2):]  a type $2$ edge is a pendant edge in an independent cherry; 
	\item[(E3):]  a type $3$ edge is a pendant edge contained in a pitchfork but not a cherry; 
	\item[(E4):]  a type  $4$ edge is an independent pendant edge (i.e, contained in neither a pitchfork nor a cherry). 	
\end{itemize}

 \begin{figure}[ht]
	\begin{center}
		{\includegraphics[width=0.9\textwidth]{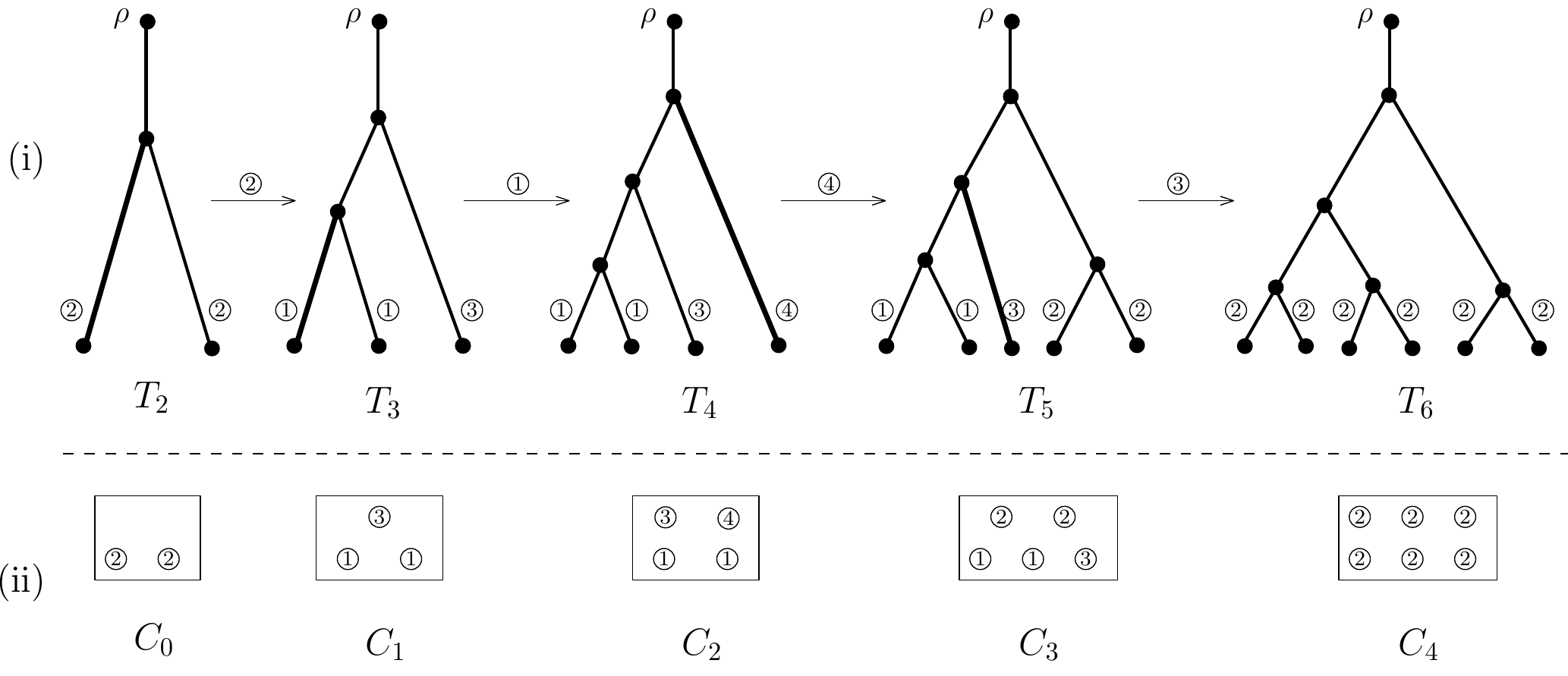}}
	\end{center}
	\caption{A sample path of the YHK model and the associated urn model. (i): A sample path of the YHK model evolving from $T_2$ with two leaves to $T_6$ with six leaves. The labels of the leaves are omitted for simplicity. The type of pendant edges is indicated by the circled numbers next to them. For $2\le i \le 5$, the edge selected in $T_i$ to generate $T_{i+1}$ is highlighted in bold and the associated edge type is indicated in the circled number above the arrows. (ii) The associated urn model with four colours, derived from the types of pendants edges in the trees.  
	 Note that in the vector form we have $C_0=(0,2,0,0), C_1=(2,0,1,0), C_2=(2,0,1,1), C_3=(2,2,1,0)$ and $C_4=(0,6,0,0)$. 
	}
	\label{fig:yhk_dynamics}
\end{figure}


It is straightforward to see that any  pendant edge in a phylogenetic tree with at least two leaves belongs to one and only  one of the above four types.  Furthermore, the numbers of pitchforks and independent cherries in a tree are precisely half of the numbers of type-1 and type-2 edges, respectively. 

As illustrated in Fig.~\ref{fig:yhk_dynamics},  the composition of the types of the pendant edges in $T[e]$, the tree obtained from $T$ by attaching an extra leaf to a pendant edge $e$, is determined by the composition of pendant edge types in $T$ and the type of $e$ as follows.
When  $e$ is  type 1 , then the number of type 4 edges in $T[e]$ increases by one compared with that in $T$ while the number of edges of each of the other three types is the same. This holds because both $T[e]$ and $T$ have the same number of cherries and that of pitchforks (see $T_3$ and $T_4$ in Fig.~\ref{fig:yhk_dynamics}).   When
$e$ is of type 2, then the number of type-2 edges decreases by two while the numbers of type 1 and of type 3 increase by two and one, respectively. This is because in this case one independent cherry is replaced by one pitchfork. When $e$ is type 3, one pitchfork is replaced by two independent cherries, hence  the number of type 2 edges increases by four while the numbers of edges of type 1 and of type-3 decrease by two and one, respectively. Finally, when $e$ is type 4,  one independent pendant edge is replaced by one independent cherry, and hence the number of type 2 edges increases by two and that of type 4 edges decreases by one.  

Using the dynamics described in the last paragraph, we can associate a YHK process starting with a tree $T_m$ with a corresponding urn process $(C_0,R)$ as follows. The urn model contains four colours in which colour $i$ ($1\le i \le 4$) is designated for type $i$ edges. In the initial urn $C_0=(C_{0,1},\cdots,C_{0,4})$, the number $C_{0,i}$ is precisely the number of type $i$ edges in $T_m$. Furthermore,  the replacement matrix $R$ is the following $4 \times 4$ matrix:
\begin{equation} 
\label{Matrix:R}
 R = \begin{bmatrix*}[r]
0 ~& 0 ~&0~&1\\
2~&-2~&1~&0\\
-2~&4~&-1~&0\\
0~&2~&0~&-1
\end{bmatrix*}.
\end{equation}

Given an arbitrary tree $T$, let $\egt(T)=\big(|E_1(T)|, |E_2(T)|,|E_3(T)|,|E_4(T)|\big)$ be the pendant type vector associated with $T$ where $|E_i(T)|$ counts  the number of type $i$ edges in $T$ 
for $1\le i \le 4$.

The following result will enable us to obtain the joint  distribution on pitchforks and cherries for the YHK model. 

\begin{theorem}
	\label{thm:yhk:color}
	Suppose that $T_m$ is an arbitrary phylogenetic tree with $m$ leaves with $m\ge 2$, and that $T_n$ is a tree with $n$ leaves generated by the YHK process starting with $T_m$.  
	 Then we have 
	\begin{equation}
	\frac{\egt(T_n)}{n} \asconv  
	\mathbf{v}_1	
	~\quad~\mbox{and}~\quad~
	\frac{ \egt(T_n) - n\mathbf{v}_1 }{\sqrt{n}} \wconv \mathcal{N}\left (\bzero,\Sigma \right ),
	\end{equation}
	where $\mathbf{v}_1=\big(\frac{2}{6},\frac{2}{6},\frac{1}{6}, \frac{1}{6} \big)$ and
	\begin{equation}
	\label{eq:sigma:yhk:urn}
		\Sigma  = \frac{1}{1260} \begin{bmatrix*}[r]
	\,276 ~&~ -388  ~&~ 138 ~&~ -26\,\\ 
	\,-388 ~ &~ 724 ~&~ -194 ~&~ -142\, \\
	\,138 ~&~ -194  ~&~ 69  ~&~ -13 \,\\
\,	-26 ~&~ -142 ~&~  -13 ~&~ 181 \,\end{bmatrix*}. 
 \end{equation}
\end{theorem}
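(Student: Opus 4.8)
The plan is to recognize the evolution of the pendant-type vector $\egt(T_n)$ under the YHK process as an instance of the extended P\'olya urn model from Section~\ref{sec:urn}, and then invoke Theorems~\ref{thm1} and~\ref{thm2} directly. Concretely, I would set $\udim=4$, let colour $i$ correspond to a type-$i$ pendant edge, and take the initial configuration $C_0=\egt(T_m)$ and the replacement matrix $R$ from~\eqref{Matrix:R}. The key structural observation, already spelled out in the paragraph preceding the theorem, is that when the YHK process attaches a new leaf to a uniformly chosen pendant edge of type $i$, the change to $\egt$ is exactly the $i$-th row of $R$; moreover, choosing a uniform pendant edge is the same as choosing colour $i$ with probability proportional to $C_{n-1,i}$, so the dynamics~\eqref{eq:main:urn:dynamic} and the conditional law of $\chi_n$ match the urn model on the nose. (One should note that Step (ii) of the YHK process, the random relabelling of $T_m$, does not affect the pendant-type counts, so the starting configuration is deterministically $\egt(T_m)$.)

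Next I would verify that $R$ satisfies assumptions (A1)--(A4). Tenability (A1): since adding a leaf never requires removing a pendant edge that is not present (a type-2 attachment consumes two type-2 edges, but a type-2 edge only ever occurs in pairs forming an independent cherry; a type-3 attachment consumes two type-3 edges, but pitchfork pendant edges of type 3 also come in pairs; etc.), the process never gets stuck; this can be argued from the combinatorics of the four edge types, or simply from the fact that the YHK process is well-defined for all $n\ge m$. For (A2)--(A4) I would compute the characteristic polynomial of $R$ and find its eigenvalues; the claimed limit $\mathbf{v}_1=(\tfrac13,\tfrac13,\tfrac16,\tfrac16)$ forces $\lambda_1=1$ (indeed $\mathbf{v}_1 R = \mathbf{v}_1$ and $R\,\mathbf{e}^\top=\mathbf{e}^\top$, since each row of $R$ sums to $1$), so $R$ is strictly balanced with principal eigenvalue $s=1$ and $\mathbf{v}_1$ as the normalized principal left eigenvector, giving (A3). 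The remaining eigenvalues of $R$ are the roots of a cubic; one checks they are real, distinct, and each strictly less than $\lambda_1/2=\tfrac12$, which gives the ``small'' condition (A2), and real diagonalisability (A4) then follows from having four distinct real eigenvalues.

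Once (A1)--(A4) are in place, Theorem~\ref{thm1} yields $(ns)^{-1}C_n = n^{-1}\egt(T_n)\asconv \mathbf{v}_1$, which is the first assertion. Theorem~\ref{thm2} yields $n^{-1/2}(\egt(T_n)-n\mathbf{v}_1)\wconv\NN(\bzero,\Sigma)$ with $\Sigma=\sum_{i,j=2}^4 \frac{s\lambda_i\lambda_j\,\bu_i^\top\mathrm{diag}(\mathbf{v}_1)\bu_j}{s-\lambda_i-\lambda_j}\,\mathbf{v}_i^\top\mathbf{v}_j$; substituting $s=1$, the three nonprincipal eigenvalues, and a choice of right/left eigenvectors $\bu_i,\mathbf{v}_i$ (normalized so that~\eqref{eq:u:v:innerproduct} holds) produces the explicit $4\times 4$ matrix in~\eqref{eq:sigma:yhk:urn} after simplification. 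The main obstacle is purely computational rather than conceptual: one must diagonalise $R$ exactly (the nonprincipal eigenvalues are messy roots of a cubic, so it is cleaner to work with the similarity transformation symbolically, keeping $\lambda_2,\lambda_3,\lambda_4$ as formal roots and using Vieta's relations), and then carry out the double sum defining $\Sigma$ and check that all the irrational parts cancel to leave the rational matrix $\tfrac{1}{1260}[\cdots]$. I would organize this by first confirming $\mathbf{e}^\top$ and $\mathbf{v}_1$ as the principal eigenpair, then reducing to the $3\times 3$ action of $R$ on the complement of $\mathbf{e}^\top$, and finally verifying the covariance formula either by the eigen-sum above or, as a cross-check, by confirming that $\Sigma$ satisfies the Lyapunov-type fixed-point equation that characterizes it.
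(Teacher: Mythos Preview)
Your approach is exactly the one the paper takes: identify the pendant-type vector as a four-colour urn with replacement matrix $R$ from~\eqref{Matrix:R}, verify (A1)--(A4), and read off both conclusions from Theorems~\ref{thm1} and~\ref{thm2}. Two points deserve correction, however.

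First, and most importantly, the nonprincipal eigenvalues are not ``messy roots of a cubic'': the characteristic polynomial of $R$ factors completely over the integers, and the spectrum is $\{1,0,-2,-3\}$. The paper exhibits an explicit diagonalisation $U^{-1}RU=\mathrm{diag}(1,0,-2,-3)$ with
\[
U=\begin{bmatrix*}[r]1&1&-1&-1\\1&0&-1&-3\\1&-2&2&5\\1&0&2&3\end{bmatrix*},\qquad
U^{-1}=\tfrac{1}{6}\begin{bmatrix*}[r]2&2&1&1\\2&-2&-2&2\\-4&2&-2&4\\2&-2&1&-1\end{bmatrix*},
\]
so the eigenvectors have rational entries and the covariance formula in Theorem~\ref{thm2} is a short finite sum of rational matrices. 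No Vieta-style cancellation of irrationals is needed; the entry $\tfrac{1}{1260}$ simply appears as the common denominator after summing the three nonzero terms (the $\lambda_2=0$ term drops out).

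Second, your tenability parenthetical is slightly off: a type-$3$ draw does \emph{not} consume two type-$3$ edges. Row~$3$ of $R$ is $(-2,4,-1,0)$, so it removes two type-$1$ edges and one type-$3$ edge, and each pitchfork contributes exactly one type-$3$ edge (not a pair). Your fallback argument---tenability follows because the YHK process is combinatorially well-defined for all $n\ge m$---is the clean way to dispose of (A1).

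Finally, a small bookkeeping point you glossed over: with $C_0=\egt(T_m)$, the urn configuration after $k$ draws is $C_k=\egt(T_{m+k})$, so Theorems~\ref{thm1} and~\ref{thm2} deliver limits for $C_k/k$ and $(C_k-k\mathbf{v}_1)/\sqrt{k}$, not directly for $\egt(T_n)/n$. The paper closes this gap with the trivial observation that $\sqrt{(n-m)/n}\to 1$ and $m\mathbf{v}_1/\sqrt{n}\to\bzero$; you should include that step explicitly.
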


\begin{proof}
Consider the YHK process $\{T_{n}\}_{n\ge m}$ starting with $T_m$. 
Let $C_k=\egt(T_{k-m})$ for $k\ge m$. Then $C_k = (C_{k,1}, \dots, C_{k,4})$, with $C_{k,i}=|E_i(T_{k-m})|$, is the urn model of $4$ colours derived from the pendant edge decomposition of the YHK process. Therefore, it is a tenable  model starting with  $C_0=\egt(T_m)$ and replacement  matrix $R$ as given in~\eqref{Matrix:R}. 

Note that $R$ is diagonalisable as 
\[U^{-1}R U=\Lambda  \]
holds with
\begin{equation} \label{Decomp-YHK}
U = \begin{bmatrix*}[r]
1 & 1&-1&-1\\
1&0&-1&-3\\
1&-2&2&5\\
1&0&2&3
\end{bmatrix*},  
\qquad 
\Lambda = \begin{bmatrix*}[r]  1&0&0 & 0 \\ 0&0&0&0& \\ 0&0 &-2&0 \\ 0&0&0&-3\end{bmatrix*} 
\qquad \text {and }  \qquad 
U^{-1}=  \frac{1}{6}  \begin{bmatrix*}[r]
2&2& 1&1\\
2&-2&-2&2\\
-4&2&-2&4\\
2&-2&1&-1
\end{bmatrix*}.
\end{equation}
Therefore, $R$ satisfies condition (A4). Next, (A2) holds because  $R$  has eigenvalues  
$$s=\lambda_1=1,~\quad~ \lambda_2=0, ~\quad~\lambda_3=-2, ~\quad~\lambda_4=-3,$$
where $s=\lambda_1=1$ is the principal eigenvalue. Furthermore, put $\mathbf{u}_i=U\mathbf{e}^\top_i$ and $\mathbf{v}_i=\mathbf{e}_iU^{-1}$ for $1\le i \le 4$. Then  (A3) follows by noting that $\mathbf{u}_1=(1,1,1,1)^\top$ is the principal right eigenvector, and $\mathbf{v}_1=\frac{1}{6}\big(2,2,1,1\big)$ is the principal left eigenvector. 

Since (A1)--(A4) are satisfied by the replacement matrix $R$, by Theorem~\ref{thm1} it follows that
$$
	\frac{C_k}{k} \asconv 
\mathbf{v}_1
~~\mbox{with}~~ {k\to \infty}
$$
and hence 
$$
\frac{\egt(T_n)}{n}=\frac{n-m}{n} 	\frac{C_{n-m}}{n-m} \asconv 
\mathbf{v}_1
~~\mbox{with}~~ {n\to \infty}.$$
By Theorem \ref{thm2} we have 
\begin{equation}
\label{eq:pda:urn:pf:slln}
\frac{ C_{n-m} - (n-m)\mathbf{v}_1 }{\sqrt{n-m}}=
 \dfrac{ C_k - k  v_1}{\sqrt{k}} \wconv \mathcal{N}(\bzero,  \Sigma),
 \end{equation}
 where 
\begin{equation} 
\Sigma =  \sum_{i,j=2}^4 \frac{ \lambda_i \lambda _j  \bu_i^\top \mbox{\em diag}(\mathbf{v}_1) \bu_j  }{1-\lambda_i -\lambda_j}  \mathbf{v}_i^\top \mathbf{v}_j.
\end{equation}
Therefore, we have
\begin{align*}
\frac{ \egt(T_n) - n\mathbf{v}_1 }{\sqrt{n}} 
&=\frac{ C_{n-m} - (n-m)\mathbf{v}_1 }{\sqrt{n}}+\frac{m\mathbf{v}_1}{\sqrt{n}} \\
&=\frac{\sqrt{n-m}}{\sqrt{n}}\frac{ C_{n-m} - (n-m)\mathbf{v}_1 }{\sqrt{n-m}}
+\frac{m\mathbf{v}_1}{\sqrt{n}} \\
&\wconv \mathcal{N}\left (\bzero,\Sigma \right ).
\end{align*}
Here the convergence follows from~\eqref{eq:pda:urn:pf:slln} and the fact that
$\frac{\sqrt{n-m}}{\sqrt{n}}$ converges to $1$ and $\frac{m\mathbf{v}_1}{\sqrt{n}}$ convergences to $0$ when $n$ approaches infinity. 
\epf
\end{proof}

By Theorem~\ref{thm:yhk:color}, it is straightforward to obtain the following result on the joint distribution of cherries and pitchforks, which also follows a general result in~\citep[Theorem 1.22]{Janson2014} .

\begin{corollary}\label{cor:yhk}
	Under the YHK model, for the joint distribution $(A_n,B_n)$ of pitchforks and  cherries we have
	\begin{equation}
	\frac{1}{n}(\pf_n , \ch_n) \asconv  \Big(\frac{1}{6}, \frac{1}{3} \Big) 
	\end{equation}
	and
	\begin{equation}
	\frac{ (\pf_n , \ch_n ) - n ( 1/6, 1/3)}{\sqrt{n}} \wconv \mathcal{N}\left (\bzero,\frac{1}{1260 } 
	\begin{bmatrix*}[r]
	69 ~& -28\\
	-28 ~& 56
	\end{bmatrix*} \right ).
	\end{equation}
\end{corollary}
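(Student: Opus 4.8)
The plan is to deduce Corollary~\ref{cor:yhk} directly from Theorem~\ref{thm:yhk:color} by a linear change of variables, exploiting the two structural facts already recorded in the text: the number of pitchforks in $T_n$ equals half the number of type-1 edges, and the number of cherries equals (type-1 edges)$/2$ + (type-2 edges)$/2$, since every cherry is either dependent (contributing two type-1 pendant edges) or independent (contributing two type-2 pendant edges). In matrix form, $(\pf_n,\ch_n) = \egt(T_n)\,M^\top$ where
\begin{equation}
\label{eq:cor:M}
M = \begin{bmatrix*}[r] 1/2 & 0 & 0 & 0 \\ 1/2 & 1/2 & 0 & 0 \end{bmatrix*}.
\end{equation}
Here the first row reads off $\pf_n = \tfrac12|E_1(T_n)|$ and the second $\ch_n = \tfrac12|E_1(T_n)| + \tfrac12|E_2(T_n)|$.

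Given \eqref{eq:cor:M}, the almost sure statement is immediate: applying the continuous linear map $x \mapsto xM^\top$ to $\egt(T_n)/n \asconv \mathbf{v}_1 = (\tfrac13,\tfrac13,\tfrac16,\tfrac16)$ yields $\tfrac1n(\pf_n,\ch_n) \asconv \mathbf{v}_1 M^\top = (\tfrac16, \tfrac16+\tfrac16) = (\tfrac16,\tfrac13)$, as claimed. For the central limit statement, I would write
\begin{equation}
\frac{(\pf_n,\ch_n) - n(1/6,1/3)}{\sqrt n} = \frac{\egt(T_n)M^\top - n\mathbf{v}_1 M^\top}{\sqrt n} = \left( \frac{\egt(T_n) - n\mathbf{v}_1}{\sqrt n}\right) M^\top,
\end{equation}
and invoke the linear-image property of the multivariate normal distribution together with the continuous mapping theorem: if $Z_n \wconv \mathcal N(\bzero,\Sigma)$ then $Z_n M^\top \wconv \mathcal N(\bzero, M\Sigma M^\top)$. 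So the only remaining task is the purely computational one of verifying that $M\Sigma M^\top = \tfrac{1}{1260}\left[\begin{smallmatrix} 69 & -28 \\ -28 & 56 \end{smallmatrix}\right]$ with $\Sigma$ as in \eqref{eq:sigma:yhk:urn}; this is a $2\times 4$ times $4\times 4$ times $4\times 2$ product and amounts to extracting, up to the factor $1/4$, the appropriate $2\times 2$ combination of the top-left $2\times 2$ block of $\Sigma$. Concretely $M\Sigma M^\top$ has entries $\tfrac14\Sigma_{11}$, $\tfrac14(\Sigma_{11}+\Sigma_{12})$, and $\tfrac14(\Sigma_{11}+2\Sigma_{12}+\Sigma_{22})$, i.e. $\tfrac{1}{1260}\cdot\tfrac14(276, 276-388, 276-776+724) = \tfrac{1}{1260}(69,-28,56)$, which matches.

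There is essentially no obstacle here: the corollary is a corollary precisely because all the analytic content (the urn limit theorems, the identification of $R$, $\mathbf{v}_1$, and $\Sigma$) is already in Theorem~\ref{thm:yhk:color}. The only point requiring a word of care is the clean statement that $(\pf_n,\ch_n)$ is an \emph{exact} (deterministic, path-by-path) linear function of the type vector $\egt(T_n)$ — this is where the observation ``the numbers of pitchforks and independent cherries in a tree are precisely half of the numbers of type-1 and type-2 edges'' is used, supplemented by the trivial fact that the total cherry count is the sum of dependent and independent cherries. Once \eqref{eq:cor:M} is on the table, the rest is the standard fact that continuous linear images preserve almost sure convergence and send $\mathcal N(\bzero,\Sigma)$ to $\mathcal N(\bzero, M\Sigma M^\top)$, plus the arithmetic above. \epf
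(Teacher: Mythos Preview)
Your proposal is correct and takes essentially the same approach as the paper: the paper's proof also simply notes that $A_n=\tfrac12\egt_{n,1}$ and $B_n=\tfrac12(\egt_{n,1}+\egt_{n,2})$ and declares the result to follow from Theorem~\ref{thm:yhk:color}. You have supplied more detail than the paper does—writing the linear map explicitly as a matrix $M$, invoking the continuous mapping theorem, and carrying out the $M\Sigma M^\top$ arithmetic—but the underlying idea is identical.
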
	

\begin{proof}
Consider the YHK process $\{T_{n}\}_{n\ge 2}$ starting with a tree $T_2$ with two leaves. Denote the $i$-th entry in $\egt(T_n)$ by $\egt_{n,i}$ for $1\le i \le 4$.  Then the corollary follows from Theorem~\ref{thm:yhk:color} by noting that we have $A_n=\frac{\egt_{n,1}}{2}$ and 
$B_n=\frac{\egt_{n,1}+\egt_{n,2}}{2}$. 
\epf	
\end{proof}	

The above result is consistent with the previously known results on the mean and (co-)variance of the joint distribution of cherries and pitchforks (see, e.g.,~\cite{WuChoi16,CTW19} ), namely, under the YHK model and for $n\ge 7$ we have
$$
\ee(A_n)=\frac{n}{6}, 
\quad
\ee(B_n)=\frac{n}{3}, 
\quad
\var(A_n)=\frac{23n}{420}, 
\quad
\var(B_n)=\frac{2n}{45},~~\mbox{and}~~ \cov(A_n,B_n)=-\frac{n}{45}.
$$


\section{Limiting Distributions under the PDA Model}
 \label{sec:results_pda}

In this section, we study the limiting joint distribution of the random variables $\pf_n$ (i.e., the number of pitchforks) and $\ch_n$ (i.e., the number of essential cherries) under the PDA model. 

To study PDA model, in addition to the four edge types (E1)-(E4) considered in Section~\ref{sec:results_yhk}, which partitions the set of pendant edges, we
need two additional edge types concerning the internal edges. Specifically, 
 \begin{itemize}
	\item[(E5):]  a type $5$ edge is an internal edge adjacent to an independent cherry;
	\item[(E6):]  a type $6$ edge is an internal edge that is not type $5$.   
\end{itemize}

For $1\le i \le 6$, let $E_i(T)$ be the set of edges of type $i$. Then the edge sets $E_1(T),\dots,E_6(T)$ form a partition of the edge set of $T$. That is, each edge in $T$ belongs to one and only one $E_i(T)$. Furthermore, let $\aegt(T)=\big(|E_1(T)|, \dots,|E_6(T)|\big)$ be the type vector associated with $T$, where $|E_i(T)|$ counts  the number of type $i$ edges in $T$.

 \begin{figure}[ht]
	\begin{center}
		{\includegraphics[width=0.9\textwidth]{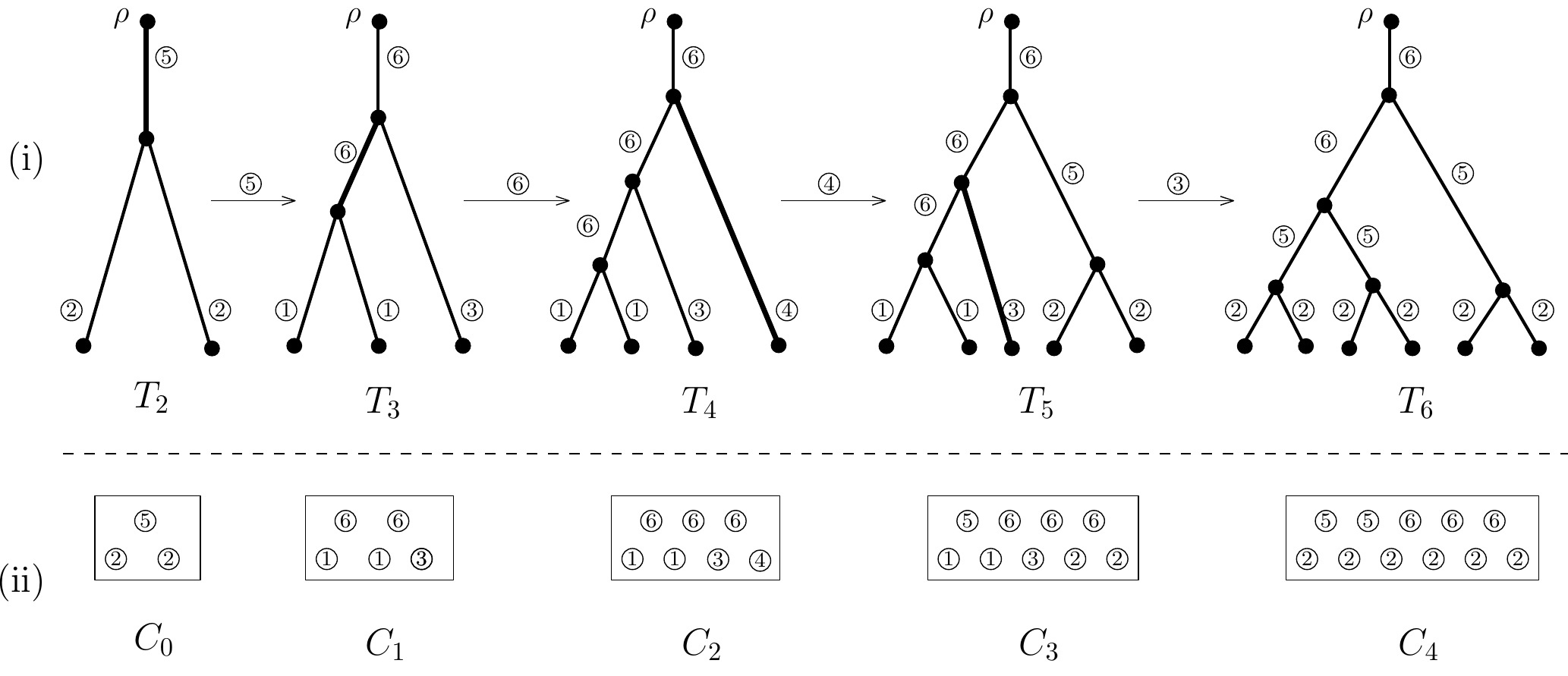}}
	\end{center}
	\caption{A sample path of the PDA model and the associated urn model. (i) A sample path of the PDA model evolving from $T_2$ with two leaves to $T_6$ with six leaves. The labels of the leaves are omitted for simplicity. The edge types are indicated by circled numbers. For $2\le i \le 5$, the edge selected in $T_i$ to generate $T_{i+1}$ is highlighted in bold and the associated edge type is indicated in the circled number above the arrows. (ii) The associated urn model with six colours, derived from the edge types in the trees. Note that in the vector form we have $C_0=(0,2,0,0,1,0), \ldots, C_3=(2, 2, 1, 0,3)$ and $C_4=(0,6,0,0,2,3)$. 		
	}
	\label{fig:pda_dynamics}
\end{figure}

As illustrated in Fig.~\ref{fig:pda_dynamics},  the composition of the edge types  in $T[e]$, which is  obtained from $T$ by attaching an extra leaf to edge $e$, is determined by the composition of edge types in $T$ and the type of $e$. First, if $e$ is a pendant edge, the change of the composition of the pendant edge types  in $T[e]$ is the same as described in Section~\ref{sec:results_yhk}, and the change of the composition of the interior edge types in $T[e]$ is described as follows: 
\begin{itemize}
	\item[(i)] If $e$ is type-1, then $|E_i(T[e])|-|E_i(T)|$ is $0$ if $i=5$, and $1$ if $i=6$;
	\item[(ii)] if $e$ is type-2, then $|E_i(T[e])|-|E_i(T)|$ is $-1$ if $i=5$, and $2$ if $i=6$;
	\item[(iii)] if $e$ is type-3, then $|E_i(T[e])|-|E_i(T)|$ is $2$ if $i=5$, and $-1$ if $i=6$;
	\item[(iv)] if $e$ is type 4, then $|E_i(T[e])|-|E_i(T)|$ is $1$ if $i=5$, and $0$ if $i=6$.
\end{itemize}
Finally, when $e$ is type-5, the change it caused is the same of that of a type-2 edge, and when $e$ is type 6, the change it caused is the same of that of type-1 ege. Therefore, we can associate a PDA process starting with a tree $T_0$ with a corresponding urn process $(C_0,R)$ as follows. The urn model contains six colours in which colour $i$ ($1\le i \le 6$) is designated for type $i$ edges. In the initial urn $C_0=(C_{0,1},\cdots,C_{0,6})$, the number $C_{0,i}$ is precisely the number of type $i$ edges in $T_0$. Furthermore,  the replacement matrix $R$ is the following $6 \times 6$ matrix:
\begin{equation} \label{Matrix:R:PDA}
R = \begin{bmatrix*}[r]
0~&0 ~&0 ~&1 ~&0 ~&1\\
2~&-2~&1 ~&0 ~&-1 ~&2\\
-2~&4~&-1 ~&0 ~&2 ~&-1\\
0~&2~&0   ~&-1 ~&1 ~&0\\
2~&-2~&1 ~&0 ~&-1 ~&2\\
0~&0~&0 ~&1 ~&0 ~&1
\end{bmatrix*}.
\end{equation}

Note that the replacement matrix for the YHK model in~\eqref{Matrix:R} is a submatrix of the replacement matrix in~\eqref{Matrix:R:PDA}; and the last (respectively, second last) row  in~\eqref{Matrix:R:PDA} is the same as its first (respectively, second) row. These two observations are direct consequences of the dynamic described above. The theorem below describes the asymptotic behaviour of  $\aegt(T_n)$, which enables us to deduce the asymptotic properties  of  the joint  distribution of the number of  pitchforks and the number of cherries for the PDA model in Corollary \ref{cor:pda}. 

\begin{theorem}
	\label{thm:pda:color}
	Suppose that $T_m$ is an arbitrary phylogenetic tree with $m$ leaves with $m\ge 2$, and that $T_n$ is a tree with $n$ leaves generated by the PDA process starting with $T_m$.  
	Then we have 
	\begin{equation}
	\frac{\aegt(T_n)}{n} \asconv  
	\mathbf{v}_1	
	~\quad~\mbox{and}~\quad~
	\frac{ \aegt(T_n) - n\mathbf{v}_1 }{\sqrt{n}} \wconv \mathcal{N}\left (\bzero,\Sigma \right ),
	\end{equation}
	as $n \to \infty$, where $\mathbf{v}_1= \frac{1}{16}( 2,2,1,3,1,7)$ and 
	\begin{equation}
	\label{eq:sigma:pda:urn}
 \Sigma  = \frac{1}{64} \begin{bmatrix*}[r]
12  ~& -12  ~&6 ~&-6 ~& -6  ~&6\\
-12  ~& 28 ~&-6 ~&-10 & 14 ~& -14 \\
6  ~& -6  ~&3 &-3 ~&-3  ~&3 \\
-6 ~&  -10 ~&-3 ~& 19 ~&-5  ~&5 \\
-6  ~&  14 ~&-3 ~&-5  ~&7~& -7\\
6  ~&-14 ~& 3 ~& 5 ~& -7  ~&7
\end{bmatrix*}. 
	\end{equation}
\end{theorem}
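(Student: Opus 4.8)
The plan is to follow the template of the proof of Theorem~\ref{thm:yhk:color}, now with a six‑colour urn. Consider the PDA process $\{T_n\}_{n\ge m}$ started at $T_m$ and set $C_k=\aegt(T_{m+k})$ for $k\ge 0$, so that $\aegt(T_n)=C_{n-m}$. By the edge‑type dynamics described above, $\{C_k\}$ is the urn process with six colours, initial configuration $C_0=\aegt(T_m)$ and replacement matrix $R$ from~\eqref{Matrix:R:PDA}: at each step an edge of $T_{m+k}$ is drawn uniformly (so a type‑$i$ edge is chosen with probability $C_{k,i}/t_k$) and its effect on the type vector is the $i$‑th row of $R$, i.e.\ $C_{k+1}=C_k+\chi_{k+1}R$ as in~\eqref{eq:main:urn:dynamic}. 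This urn is tenable, i.e.\ (A1) holds, because it is induced by a genuine tree‑growing process and the negative entries of $R$ never cause a deadlock: type‑$2$ pendant edges occur in pairs, each independent cherry carries exactly one adjacent type‑$5$ edge, and each pitchfork carries exactly one type‑$6$ edge, so whenever a row with a $-k$ entry in colour $c$ is applied there are at least $k$ balls of colour $c$ present.

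Next I would verify (A2)--(A4). All row sums of $R$ equal $2$, so $\mathbf{u}_1=\mathbf{e}^\top$ is a right eigenvector for $\lambda_1=\pev=2$, and a direct check shows $\mathbf{v}_1=\frac1{16}(2,2,1,3,1,7)$ is a stochastic left eigenvector for the same eigenvalue, which is (A3). Computing the characteristic polynomial gives $\det(R-\lambda I)=\lambda^{3}(\lambda-2)(\lambda+2)(\lambda+4)$, so the eigenvalues are $2,0,0,0,-2,-4$; since $2>2\lambda$ for every other eigenvalue $\lambda$, (A2) holds. Assumption (A4) is the delicate point, since $0$ is a triple eigenvalue: one must show its geometric multiplicity is also $3$. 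This follows from $\mathrm{rank}(R)=3$, which one reads off from the linear dependencies among the rows and columns of $R$ (rows $5$ and $6$ coincide with rows $2$ and $1$, and $\mathrm{Col}_1=2\,\mathrm{Col}_3$, $\mathrm{Col}_2=2\,\mathrm{Col}_5$, $\mathrm{Col}_6=3\,\mathrm{Col}_3+\mathrm{Col}_4+\mathrm{Col}_5$). Hence $R$ is diagonalisable over $\mathbb{R}$, and one records an explicit $U$ with $U^{-1}RU=\Lambda=\mbox{diag}(2,0,0,0,-2,-4)$, analogous to~\eqref{Decomp-YHK}; for instance the left $0$‑eigenspace is spanned by $\mathbf{e}_1-\mathbf{e}_6$, $\mathbf{e}_2-\mathbf{e}_5$ and $(-1,1,1,-1,0,0)$, while $\mathbf{u}_5=(1,1,-3,-3,1,1)^\top$ and $\mathbf{u}_6=(1,5,-9,-5,5,1)^\top$ are right eigenvectors for $-2$ and $-4$.

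With (A1)--(A4) in hand, Theorems~\ref{thm1} and~\ref{thm2} applied to $\{C_k\}$ give $C_k/k\asconv\mathbf{v}_1$ and $(C_k-k\mathbf{v}_1)/\sqrt{k}\wconv\mathcal{N}(\bzero,\Sigma)$ with \[ \Sigma=\sum_{i,j=2}^{6}\frac{2\,\lambda_i\lambda_j\,\mathbf{u}_i^\top\mbox{diag}(\mathbf{v}_1)\mathbf{u}_j}{2-\lambda_i-\lambda_j}\,\mathbf{v}_i^\top\mathbf{v}_j . \] Because $\lambda_2=\lambda_3=\lambda_4=0$, every summand with an index in $\{2,3,4\}$ vanishes, so $\Sigma$ equals the sum of the four terms with $i,j\in\{5,6\}$; substituting $\lambda_5=-2$, $\lambda_6=-4$, together with $\mathbf{v}_1$ and the eigenvectors $\mathbf{u}_5,\mathbf{u}_6,\mathbf{v}_5,\mathbf{v}_6$, then yields~\eqref{eq:sigma:pda:urn} after routine arithmetic. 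Finally, passing from $C_{n-m}$ back to $\aegt(T_n)$ is the same Slutsky‑type step as in Theorem~\ref{thm:yhk:color}: $\frac{\aegt(T_n)}{n}=\frac{n-m}{n}\cdot\frac{C_{n-m}}{n-m}\asconv\mathbf{v}_1$, and $\frac{\aegt(T_n)-n\mathbf{v}_1}{\sqrt{n}}=\sqrt{\tfrac{n-m}{n}}\cdot\frac{C_{n-m}-(n-m)\mathbf{v}_1}{\sqrt{n-m}}+\frac{m\mathbf{v}_1}{\sqrt{n}}\wconv\mathcal{N}(\bzero,\Sigma)$, since $\sqrt{(n-m)/n}\to1$ and $m\mathbf{v}_1/\sqrt{n}\to\bzero$. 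The only genuinely new obstacle compared with the YHK case is establishing diagonalisability at the triple eigenvalue $0$; once that is settled, the evaluation of $\Sigma$ is no harder than the four‑colour computation, thanks to the vanishing of the $\lambda=0$ contributions.
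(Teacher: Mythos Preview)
Your proposal is correct and follows essentially the same approach as the paper. The only stylistic differences are that the paper verifies (A4) by exhibiting explicit matrices $U$ and $U^{-1}$ with $U^{-1}RU=\Lambda$, whereas you argue via $\mathrm{rank}(R)=3$ to handle the triple eigenvalue~$0$; and you make explicit the helpful observation that the $\lambda_i=0$ terms in the $\Sigma$-sum vanish, which the paper leaves implicit in the final computation.
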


\begin{proof}
	Consider the PDA process $\{T_{n}\}_{n\ge m}$ starting with $T_m$. 
	Let $C_k=\aegt(T_{k-m})$ for $k\ge m$. Then $C_k = (C_{k,1}, \dots, C_{k,6})$ with $C_{k,i}=|E_i(T_{k-m})|$ is the urn model of $6$ colours derived from the edge partition of the PDA process. Therefore, it is a tenable  model starting with  $C_0=\aegt(T_m)$ and replacement  matrix $R$ as given in~\eqref{Matrix:R:PDA}. 
	
	Note that $R$ is diagonalisable as 
	\[U^{-1}R U=\Lambda  \]
	holds with $\Lambda=\text{diag}(2,0,0,0,-2,-4)$ and
\begin{equation} \label{Decomp-PDA1}
U = \begin{bmatrix*}[r]
1 ~& 2.5 ~&2 ~&1 ~&1 ~&1\\
1~&-2~&1 ~&0 ~&1 ~&5\\
1 ~&-8 ~&  -1    ~&1 ~&  -3  ~& -9\\
1 ~& -1  ~&  1 ~&   1 ~& -3 ~&  -5 \\
1  ~& 3   ~&-1   ~& 1  ~&  1 ~&   5 \\
1  ~& 1 ~&  -1  ~& -1 ~&   1   ~& 1
\end{bmatrix*}
~\quad~\mbox{and}~\quad~ 
U^{-1} = \frac{1}{176}\begin{bmatrix*}[r]
22  ~& 22 ~& 11 ~& 33  ~& 11 ~& 77 \\
4 ~& -20 ~&  -14 ~& 14 ~& 6 ~& 10\\
30 ~& 26 ~& -17 ~& 17 ~& -43  ~& -13 \\
40 ~& -24 ~& 36 ~&-36 ~& 60 ~& -76\\
66 ~& -22 ~& 33 ~& -77 ~&-11 ~& 11\\
-22 ~& 22 ~&-11 ~&11 ~& 11 ~&-11 \end{bmatrix*}.
\end{equation}
	Therefore, $R$ satisfies condition (A4). Next, (A2) holds because  $R$  has eigenvalues (counted with multiplicity) 
	$$s=\lambda_1=2,~\quad~ \lambda_2=0, ~\quad~\lambda_3=0, ~\quad~\lambda_4=0,~\quad~\lambda_5=-2,~\quad~\lambda_6=-4$$
	where $s=\lambda_1=2$ is the principal eigenvalue. Furthermore, put $\mathbf{u}_i=U\mathbf{e}^\top_i$ and $\mathbf{v}_i=\mathbf{e}_iU^{-1}$ for $1\le i \le 6$. Then  (A3) follows by noting that $\mathbf{u}_1=(1,1,1,1,1,1)^\top$ is the principal right eigenvector, and $\mathbf{v}_1= \frac{1}{16}( 2,2,1,3,1,7)$ is the principal left eigenvector. 
	
	Since (A1)--(A4) are satisfied by the replacement matrix $R$, by Theorem~\ref{thm1} it follows that
	$$
	\frac{C_k}{k} \asconv 
	\mathbf{v}_1
	~~\mbox{with}~~ {k\to \infty}
	$$
	and hence 
	$$
	\frac{\aegt(T_n)}{n}=\frac{n-m}{n} 	\frac{C_{n-m}}{n-m} \asconv 
	\mathbf{v}_1
	~~\mbox{with}~~ {n\to \infty}.
	$$
	By Theorem \ref{thm2} we have 
	\begin{equation}
	\label{eq:pda:urn:pf:slln}
	\frac{ C_{n-m} - (n-m)\mathbf{v}_1 }{\sqrt{n-m}}=
	\dfrac{ C_k - k  v_1}{\sqrt{k}} \wconv \mathcal{N}(\bzero,  \Sigma),
	\end{equation}
	where 
	\begin{equation} 
	\Sigma =  \sum_{i,j=2}^6 \frac{ \lambda_i \lambda _j  \bu_i^\top \mbox{\em diag}(\mathbf{v}_1) \bu_j  }{1-\lambda_i -\lambda_j}  \mathbf{v}_i^\top \mathbf{v}_j.
	\end{equation}
	Therefore, we have
	\begin{align*}
	\frac{ \aegt(T_n) - n\mathbf{v}_1 }{\sqrt{n}} 
	&=\frac{ C_{n-m} - (n-m)\mathbf{v}_1 }{\sqrt{n}}+\frac{m\mathbf{v}_1}{\sqrt{n}} \\
	&=\frac{\sqrt{n-m}}{\sqrt{n}}\frac{ C_{n-m} - (n-m)\mathbf{v}_1 }{\sqrt{n-m}}
	+\frac{m\mathbf{v}_1}{\sqrt{n}} \\
	&\wconv \mathcal{N}\left (\bzero,\Sigma \right ).
	\end{align*}
	Here the convergence follows from~\eqref{eq:pda:urn:pf:slln} and the fact that
	$\frac{\sqrt{n-m}}{\sqrt{n}}$ converges to $1$ and $\frac{m\mathbf{v}_1}{\sqrt{n}}$ converges to $0$ when $n$ approaches infinity. 
	\epf
\end{proof}

Similar to Corollary \ref{cor:yhk}, by Theorem~\ref{thm:pda:color} it is straightforward to obtain the following result on the joint distribution of cherries and pitchforks.

\begin{corollary}\label{cor:pda}
	Under the PDA model, for the joint distribution $(A_n,B_n)$ of pitchforks and  cherries we have
	\begin{equation}
	\frac{1}{n}(\pf_n , \ch_n) \asconv  \Big(\frac{1}{8}, \frac{1}{4} \Big) 
	\end{equation}
	and
	\begin{equation}
	\frac{ (\pf_n , \ch_n ) - n ( 1/8, 1/4)}{\sqrt{n}} \wconv \mathcal{N}\left (\bzero,~
\frac{1}{64}  	 \begin{bmatrix*}[r]
3 ~&~ 0 \\
0 ~&~ 4
\end{bmatrix*} 
	\right )
	\end{equation}
	as $ n \to \infty$.
\end{corollary}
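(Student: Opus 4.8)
The plan is to derive Corollary~\ref{cor:pda} directly from Theorem~\ref{thm:pda:color} by expressing the pitchfork and cherry counts as linear functions of the type vector $\aegt(T_n)$ and then applying the continuous mapping theorem together with the linear transformation rule for multivariate normal limits. First I would set up the bookkeeping: running the PDA process from a fixed $T_2$ with two leaves, write $\aegt_{n,i}$ for the $i$-th coordinate of $\aegt(T_n)$, $1\le i\le 6$. As observed in Section~\ref{sec:results_yhk}, the number of pitchforks is half the number of type-1 edges and the number of independent cherries is half the number of type-2 edges, while every dependent cherry contributes its two type-1 pendant edges; hence $\pf_n=\aegt_{n,1}/2$ and $\ch_n=(\aegt_{n,1}+\aegt_{n,2})/2$ exactly as in the YHK case (the extra internal edge types E5, E6 play no role in counting cherries or pitchforks). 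Equivalently, $(\pf_n,\ch_n)=\aegt(T_n)M$ where $M$ is the $6\times 2$ matrix with rows $(1/2,1/2),(0,1/2),(0,0),(0,0),(0,0),(0,0)$.

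Next I would apply the almost sure statement of Theorem~\ref{thm:pda:color}: since $\aegt(T_n)/n\asconv \mathbf v_1=\tfrac{1}{16}(2,2,1,3,1,7)$, linearity gives $\tfrac1n(\pf_n,\ch_n)=\tfrac1n\aegt(T_n)M\asconv \mathbf v_1 M$. A direct computation yields $\mathbf v_1 M=\big(\tfrac{1}{16}\cdot 2\cdot\tfrac12,\ \tfrac{1}{16}(2+2)\cdot\tfrac12\big)=(1/8,1/4)$, which is the claimed limit. For the fluctuations, Theorem~\ref{thm:pda:color} gives $\big(\aegt(T_n)-n\mathbf v_1\big)/\sqrt n\wconv\mathcal N(\bzero,\Sigma)$ with $\Sigma$ as in~\eqref{eq:sigma:pda:urn}; applying the linear map $M$ and the standard fact that if $Z\wconv\mathcal N(\bzero,\Sigma)$ then $ZM\wconv\mathcal N(\bzero,M^\top\Sigma M)$, we obtain
\[
\frac{(\pf_n,\ch_n)-n(1/8,1/4)}{\sqrt n}\wconv\mathcal N\big(\bzero,\ M^\top\Sigma M\big).
\]
It remains to verify $M^\top\Sigma M=\tfrac{1}{64}\left[\begin{smallmatrix}3&0\\0&4\end{smallmatrix}\right]$. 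Because $M$ only involves the first two coordinates, $M^\top\Sigma M$ depends only on the top-left $2\times2$ block $\tfrac{1}{64}\left[\begin{smallmatrix}12&-12\\-12&28\end{smallmatrix}\right]$ of $\Sigma$: writing $a=12,b=-12,c=28$ (all over $64$), one finds the pitchfork variance $=\tfrac14 a=3/64$, the covariance $=\tfrac14 a+\tfrac14 b=0$, and the cherry variance $=\tfrac14 a+\tfrac12 b+\tfrac14 c=(3-6+7)/64=4/64$, matching the stated matrix.

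The only genuine content beyond routine matrix arithmetic is justifying the exact identities $\pf_n=\aegt_{n,1}/2$ and $\ch_n=(\aegt_{n,1}+\aegt_{n,2})/2$, i.e.\ confirming that cherries and pitchforks are read off from the pendant-edge types in precisely the same way as under the YHK model — this follows because the edge-type partition E1--E4 of the pendant edges, and its relation to cherries and pitchforks, is unchanged when we refine the partition by adding the internal types E5, E6. I expect this to be straightforward; there is no delicate step. I would present the proof essentially as a two-line reduction: cite the coordinate identities, invoke Theorem~\ref{thm:pda:color}, apply the linear-image rules for a.s.\ and weak convergence, and record the resulting $2\times2$ covariance, mirroring the proof of Corollary~\ref{cor:yhk}.
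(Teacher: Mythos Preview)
Your proposal is correct and follows exactly the paper's own approach: identify $\pf_n=\aegt_{n,1}/2$ and $\ch_n=(\aegt_{n,1}+\aegt_{n,2})/2$, then read off both limits from Theorem~\ref{thm:pda:color} via the linear map. The paper's proof is the same two-line reduction (it omits the explicit covariance arithmetic you carried out, but the content is identical).
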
	

\begin{proof}
	Consider the PDA process $\{T_{n}\}_{n\ge 2}$ starting with a tree $T_2$ with two leaves. Denote the $i$-th entry in $\aegt(T_n)$ by $\aegt_{n,i}$ for $1\le i \le 6$.  Then the corollary follows from Theorem~\ref{thm:yhk:color} by noting that we have $A_n=\frac{\aegt_{n,1}}{2}$ and 
	$B_n=\frac{\aegt_{n,1}+\aegt_{n,2}}{2}$. 
	\epf	
\end{proof}

The above result is consistent with the previously known results on the mean and (co-)variance of the joint distribution of cherries and pitchforks (see, e.g.,~\cite{WuChoi16,CTW19} ), namely, under the PDA model and for $n\ge 7$ we have
\begin{align*}
\ee(A_n)&=\frac{n(n-1)(n-2)}{2(2n-3)(2n-5)},
~~~~~~\quad
\ee(B_n)=\frac{n(n-1)}{2(2n-5)},
~~
&\var(B_n)=\frac{n(n-1)(n-2)(n-3)}{2(2n-3)^2(2n-7)}, \\
\var(A_n)&=\frac{3(4n^3-40n^2+123n-110)}{2(2n-5)(2n-7)(2n-9)} \, \var(B_n),
~~~~~\mbox{and}~
&\cov(A_n,B_n)=\frac{-\var(B_n)}{(2n-7)}.
\end{align*}

\section{Unrooted Trees}
\label{sec:unrooted}
In this section, we extend our results in Sections ~\ref{sec:results_yhk} and ~\ref{sec:results_pda} to the unrooted version of phylogenetic trees. 
Formally,  deleting the root $\rho$ of a rooted phylogenetic tree and suppressing its adjacent interior vertex $r$ results in an unrooted tree (see Fig.~\ref{fig:unrooted}). The set of unrooted phylogenetic trees on $\{1,2,\dots,n\}$ will be denoted by $\tsp_n$. The YHK process on unrooted phylogenetic tree is similar to that on rooted ones stated in Section~\ref{subsection:model}; the only difference is that at step (ii) we shall start with an unrooted phylogenetic tree $T_m$ in $\tsp_m$ for $m\ge 3$. Similar modification suffices for the PDA processes on unrooted phylogenetic trees; see~\cite{CTW19} for more details. Note that the concepts of cherries and pitchforks can be naturally extended to unrooted trees in $\tsp_n$ for $n\ge 6$.  Moreover, let $A'_n$ and $B'_n$ be the random variables counting the number of pitchforks and cherries in a random tree in $\tsp_n$.

To associate urn models with the two processes on unrooted trees, note that for a tree $T$ in $\tsp_n$ with $n\ge 6$, we can decompose the edges in $T$ into the six types similar to those  for rooted trees, and hence define $\alpha(T)$ and $\beta(T)$ correspondingly.  Furthermore, the replacement matrix is the same as the unrooted one, that is, the replacement matrix for the YHK model is given in~\eqref{Matrix:R}  and the one for the  PDA process is given in~\eqref{Matrix:R:PDA}. See two examples in Fig.~\ref{fig:unrooted}.  We  emphasize that the condition $n\ge 6$ is essential here: for instance, there is no appropriate assignment for the edge $e_2$ in the tree $T_5$ in Fig.~\ref{fig:unrooted} in our scheme, neither type 3 nor type 4 satisfying the requirement of a valid urn model. This observation is indeed in line with the treatment of unrooted trees in~\cite{CTW19}. However, there is only one unrooted shape for $n=4$ and one for $n=5$. 
Furthermore, there are only two tree shapes for $\tsp_6$ (as depicted in $T_6^1$ and $T_6^2$ in Fig.~\ref{fig:unrooted}). In particular,  putting $\alpha_6^1=(4,0,2,0)$ and $\alpha_6^2=(0,6,0,0)$, then for each $T$ in $\tsp_6$, we have either $\alpha(T)=\alpha_6^1$ or $\alpha(T)=\alpha_6^2$.


 \begin{figure}[ht]
	\begin{center}
		{\includegraphics[width=0.9\textwidth]{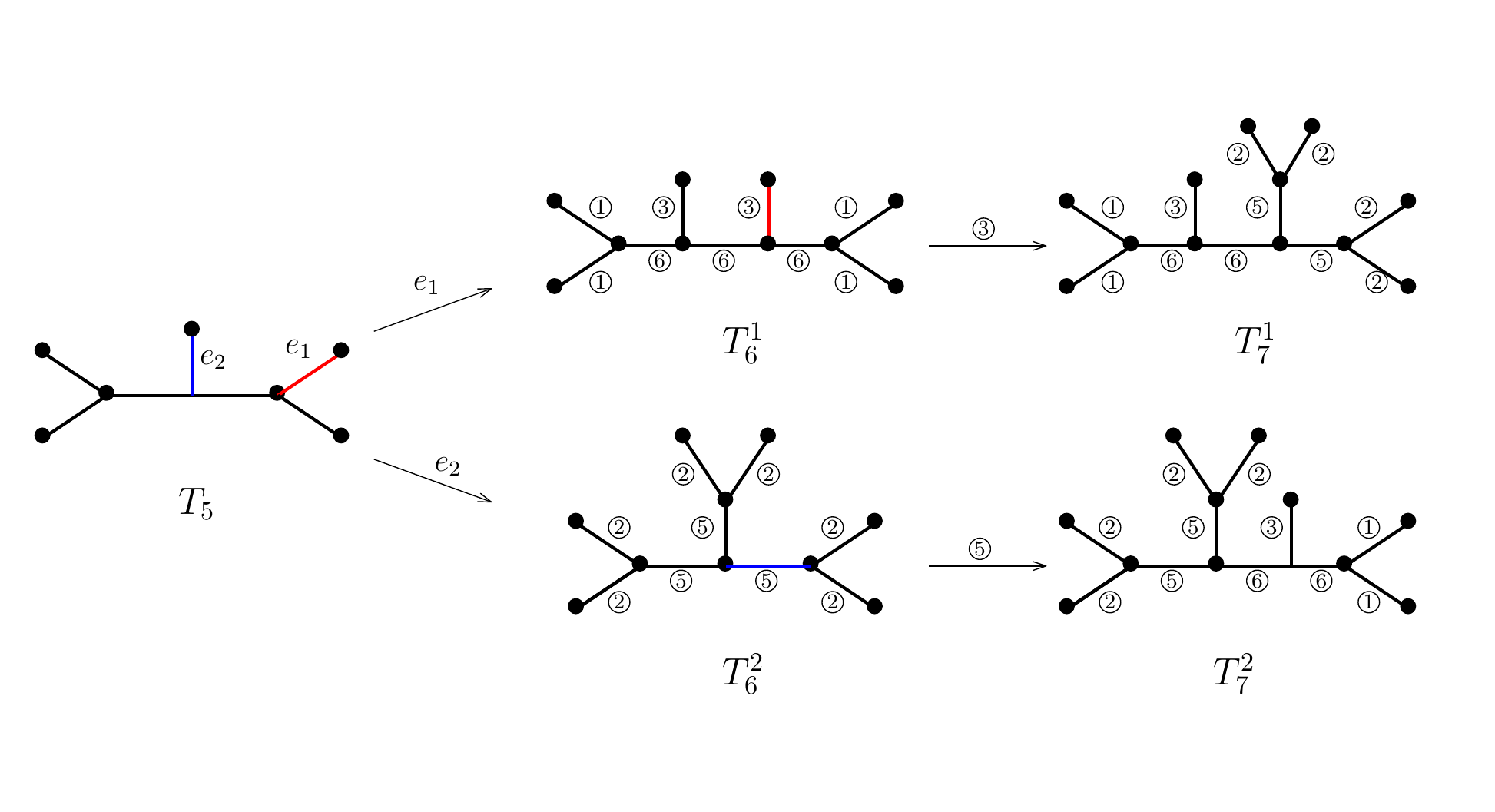}}
	\end{center}
	\caption{Example of sample paths for the PDA process on unrooted trees and the associated urn model. Two sample paths of the PDA process evolving from $T_5$: one ends with $T^1_7$ using the edges in red and the other with $T^2_7$ using the edges in blue. Leave labels are omitted for simplicity. Note that in the vector form we have $\beta(T^1_6)=(4,0,2,0,0,3)$ and $\beta(T^2_6)=(0,6,0,0,3,0)$. 
	}
	\label{fig:unrooted}
\end{figure}

Now we extend Theorem~\ref{thm:yhk:color} and Corollary~\ref{cor:yhk} to the following result concerning the limiting behaviour of the YHK process, 

\begin{theorem}
	\label{thm:yhk:color:unrooted}
	Suppose that $T_m$ is an arbitrary unrooted phylogenetic tree with $m$ leaves with $m\ge 6$, and that $T_n$ is an unrooted tree with $n$ leaves generated by the YHK process starting with $T_m$.  
	Then, as $n \to \infty$, 
	\begin{equation}
	\label{eq:yhk:urn:unrooted}
	\frac{\egt(T_n)}{n} \asconv  
	\mathbf{v}_1	
	~\quad~\mbox{and}~\quad~
	\frac{ \egt(T_n) - n\mathbf{v}_1 }{\sqrt{n}} \wconv \mathcal{N}\left (\bzero,\Sigma \right ),
	\end{equation}
	where $\mathbf{v}_1=\big(\frac{2}{6},\frac{2}{6},\frac{1}{6}, \frac{1}{6} \big)$ and $\Sigma$ is given in Eq.~(\ref{eq:sigma:yhk:urn}). In particular, as $n \to \infty$,  
		\begin{equation}
			\label{eq:yhk:model:unrooted}
	\frac{1}{n}(\pf'_n , \ch'_n) \asconv  \Big(\frac{1}{6}, \frac{1}{3} \Big) 
	~\quad~\mbox{and}~\quad~
	\frac{ (\pf'_n , \ch'_n ) - n ( 1/6, 1/3)}{\sqrt{n}} \wconv \mathcal{N}\left (\bzero,\frac{1}{1260 } 
	\begin{bmatrix*}[r]
	69 ~& -28\\
	-28 ~& 56
	\end{bmatrix*} \right ).
	\end{equation}
\end{theorem}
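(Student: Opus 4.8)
The plan is to reduce the unrooted statement to an application of Theorems~\ref{thm1} and~\ref{thm2}, exactly mirroring the proof of Theorem~\ref{thm:yhk:color}, with only bookkeeping adjustments for the different starting index and for the fact that the associated urn now genuinely uses all four colours. First I would observe that, since $m\ge 6$, the edge-type decomposition (E1)--(E4) of an unrooted tree $T_m$ is well-defined and yields a valid (tenable) urn configuration $C_0=\egt(T_m)$; this is precisely the point emphasized in the paragraph preceding the theorem, and it is why the hypothesis $m\ge 6$ cannot be relaxed. The dynamics of attaching a new pendant leaf to a pendant edge of an unrooted tree change the type vector in exactly the same way as in the rooted case (the root played no role in that local analysis), so the YHK process on $\tsp_n$ started from $T_m$ is encoded by the urn process $(C_0,R)$ with the \emph{same} replacement matrix $R$ from~\eqref{Matrix:R}.

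Next I would invoke the verification already carried out in the proof of Theorem~\ref{thm:yhk:color}: $R$ is diagonalisable with the explicit $U$, $U^{-1}$, $\Lambda$ of~\eqref{Decomp-YHK}, has eigenvalues $1,0,-2,-3$ with principal eigenvalue $\pev=1>2\lambda$ for every other eigenvalue $\lambda$, satisfies the strictly-balanced condition with $\mathbf{u}_1=(1,1,1,1)^\top$ and principal left eigenvector $\mathbf{v}_1=\tfrac16(2,2,1,1)$, and is tenable. Hence (A1)--(A4) hold, and Theorems~\ref{thm1} and~\ref{thm2} apply verbatim to $C_k$. Setting $C_k=\egt(T_{k+m})$ (so that $C_{n-m}=\egt(T_n)$), the strong law gives $\egt(T_n)/n=\tfrac{n-m}{n}\cdot C_{n-m}/(n-m)\asconv\mathbf{v}_1$, and the CLT together with the decomposition
\[
\frac{\egt(T_n)-n\mathbf{v}_1}{\sqrt n}=\frac{\sqrt{n-m}}{\sqrt n}\cdot\frac{C_{n-m}-(n-m)\mathbf{v}_1}{\sqrt{n-m}}+\frac{m\mathbf{v}_1}{\sqrt n}\wconv\mathcal{N}(\bzero,\Sigma),
\]
using $\sqrt{n-m}/\sqrt n\to1$ and $m\mathbf{v}_1/\sqrt n\to\bzero$, establishes~\eqref{eq:yhk:urn:unrooted}; the covariance matrix $\Sigma$ is the same sum $\sum_{i,j=2}^4\frac{\lambda_i\lambda_j\,\bu_i^\top\mathrm{diag}(\mathbf{v}_1)\bu_j}{1-\lambda_i-\lambda_j}\mathbf{v}_i^\top\mathbf{v}_j$ computed in Theorem~\ref{thm:yhk:color}, namely~\eqref{eq:sigma:yhk:urn}, since $\Sigma$ depends only on $R$.

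For the second assertion~\eqref{eq:yhk:model:unrooted}, I would note that for an unrooted tree in $\tsp_n$ with $n\ge6$ the same counting identities hold as in Corollary~\ref{cor:yhk}: the number of pitchforks is half the number of type-1 edges and the number of cherries is half the number of type-1 plus type-2 edges, so $(\pf'_n,\ch'_n)=\big(\tfrac{\egt_{n,1}}{2},\tfrac{\egt_{n,1}+\egt_{n,2}}{2}\big)$. Applying the continuous linear map $(x_1,x_2,x_3,x_4)\mapsto(x_1/2,(x_1+x_2)/2)$ to~\eqref{eq:yhk:urn:unrooted} — using the continuous mapping theorem for the a.s.\ limit and the linear-image rule for Gaussians for the CLT — yields the stated limit $(1/6,1/3)$ and the $2\times2$ covariance matrix $\tfrac{1}{1260}\begin{bmatrix*}[r]69&-28\\-28&56\end{bmatrix*}$, which is $M\Sigma M^\top$ for $M=\tfrac12\begin{bmatrix*}[r]1&0&0&0\\1&1&0&0\end{bmatrix*}$; this last arithmetic is identical to that in Corollary~\ref{cor:yhk} and need not be redone.

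\textbf{Main obstacle.} The only genuine subtlety — and it is a modelling point rather than a computational one — is checking that the unrooted attachment dynamics really do reproduce the rooted replacement matrix $R$, and in particular that the urn remains tenable for all $n\ge m$ when started from \emph{any} $T_m$ with $m\ge6$ (as opposed to the degenerate small cases $n=4,5$, and the two special shapes at $n=6$, where an edge such as $e_2$ in $T_5$ of Fig.~\ref{fig:unrooted} admits no valid type assignment). Once one is satisfied that the local picture at a pendant edge is insensitive to the presence or absence of a root, the rest is a transcription of the rooted proof.
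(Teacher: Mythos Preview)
Your argument for \eqref{eq:yhk:urn:unrooted} is correct and coincides with the paper's, which simply says the proof ``follows an argument similar to that for Theorem~\ref{thm:pda:color}''.

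For \eqref{eq:yhk:model:unrooted}, however, you have skipped a step that the paper treats explicitly. The variables $(\pf'_n,\ch'_n)$ are defined (see the paragraph introducing them in Section~\ref{sec:unrooted}) for the \emph{standard} unrooted YHK model, which starts from the smallest tree, not from a fixed $T_m$ with $m\ge 6$. Since the urn encoding is only valid once $n\ge 6$, the tree $T_6$ reached by the standard process is itself random: its pendant-type vector is $\alpha_6^1=(4,0,2,0)$ with probability $4/5$ or $\alpha_6^2=(0,6,0,0)$ with probability $1/5$. The paper partitions on the events $E_j=\{\egt(T_6)=\alpha_6^j\}$, writes $\egt(T_n)=\egt_n^1\,\mathbb{I}_{E_1}+\egt_n^2\,\mathbb{I}_{E_2}$, applies \eqref{eq:yhk:urn:unrooted} conditionally on each $E_j$ (legitimate because the limits there are independent of the starting tree), and recombines to obtain $\egt(T_n)/n\asconv\mathbf{v}_1$ and the CLT for the full process; only then does the linear map $(x_1,\dots,x_4)\mapsto(x_1/2,(x_1+x_2)/2)$ enter, as in Corollary~\ref{cor:yhk}. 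Your proposal applies \eqref{eq:yhk:urn:unrooted} directly to $(\pf'_n,\ch'_n)$, which as written only establishes \eqref{eq:yhk:model:unrooted} for a process whose initial tree already has at least six leaves. The missing conditioning argument is short, but it is the point of the ``In particular'', and your discussion of the $n\le 5$ degeneracies under ``Main obstacle'' addresses tenability for part one without making this bridge for part two.
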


\begin{proof}
The proof of~\eqref{eq:yhk:urn:unrooted} follows  an argument similar to that for Theorem~\ref{thm:pda:color}.  

To establish~\eqref{eq:yhk:model:unrooted}, consider the YHK process $\{T_{n}\}_{n\ge 2}$ starting with a tree $T_2$ with two leaves. 
For $n\ge 6$, let $\egt_n=\egt(T_n)$ and $\egt_{n,i}$ denote the $i$-th entry in $\egt(T_n)$  for $1\le i \le 4$.  
Consider the vector $\alpha_6^1=(4,0,2,0)$ and $\alpha_6^2=(0,6,0,0)$. For $j=1,2$, let $E_j$ be the event that $\alpha_6=\alpha_6^j$. It follows that $E_1$ and $E_2$ form a partition of the sample  space.  Moreover, we have $\pp(E_1)=4/5$ and $\pp(E_2)=1-\pp(E_1)=1/5$. Consider the random indicator variable $\mathbb{I}_{E_1}$, that is, $\pp(\mathbb{I}_{E_1}=1)=4/5$ and $\pp(\mathbb{I}_{E_1}=0)=1/5$.  Random indicator variable $\mathbb{I}_{E_2}$ is similarly defined. Then we have 
$$
\alpha_n=\alpha^1_n\mathbb{I}_{E_1}+\alpha^2_n \mathbb{I}_{E_2}.
$$
Furthermore, by~\eqref{eq:yhk:urn:unrooted} we have 
$\frac{\alpha^j_n}{n}  \asconv  \mathbf{v}_1$ a.s. on $E_j$, for $j=1,2$, and hence
$$
\frac{\alpha_n}{n}
\asconv \mathbf{v}_1 
(\mathbb{I}_{E_1}+\mathbb{I}_{E_2})=\mathbf{v}_1. 
$$
Together with $A_n'=\frac{\egt_{n,1}}{2}$ and 
$B_n'=\frac{\egt_{n,1}+\egt_{n,2}}{2}$, the almost surely convergence in~\eqref{eq:yhk:model:unrooted} follows. Finally, the convergence in distribution in~\eqref{eq:yhk:model:unrooted}  also follows from a similar argument. 
\epf
\end{proof}	

Finally, combining Theorem~\ref{thm:pda:color}, Corollary~\ref{cor:pda}, and an argument similar to the proof of Theorem~\ref{thm:yhk:color:unrooted} leads  to the following result concerning the limiting behaviour of the unrooted PDA process, whose proof is hence omitted.

\begin{theorem}
	\label{thm:pda:color:unrooted}
	Suppose that $T_m$ is an arbitrary unrooted phylogenetic tree with $m$ leaves with $m\ge 6$, and that $T_n$ is an unrooted tree with $n$ leaves generated by the PDA process starting with $T_m$.  
	Then, as $n \to \infty$, 
	\begin{equation}
	\frac{\aegt(T_n)}{n} \asconv  
	\mathbf{v}_1	
	~\quad~\mbox{and}~\quad~
	\frac{ \aegt(T_n) - n\mathbf{v}_1 }{\sqrt{n}} \wconv \mathcal{N}\left (\bzero,\Sigma \right ),
	\end{equation}
 where $\mathbf{v}_1= \frac{1}{16}( 2,2,1,3,1,7)$ and $\Sigma$ is given in Eq.~\eqref{eq:sigma:pda:urn}. In particular, as $n \to \infty$,
	\begin{equation}
	\frac{1}{n}(\pf'_n , \ch'_n) \asconv  \Big(\frac{1}{8}, \frac{1}{4} \Big) 
~~\quad~\mbox{and}~\quad~
\frac{ (\pf'_n , \ch'_n ) - n ( 1/8, 1/4)}{\sqrt{n}} \wconv \mathcal{N}\left (\bzero,~
	\frac{1}{64}  	 \begin{bmatrix*}[r]
	3 ~&~ 0 \\
	0 ~&~ 4
	\end{bmatrix*} 
	\right ).
	\end{equation}
\end{theorem}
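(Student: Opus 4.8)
The plan is to follow, almost verbatim, the two–step argument already used for Theorem~\ref{thm:yhk:color:unrooted}, substituting the PDA ingredients of Section~\ref{sec:results_pda}. For the displayed urn convergence I would first note that for an unrooted phylogenetic tree on $n\ge 6$ leaves the six edge types (E1)--(E6) still partition the edge set, and that attaching a new leaf to an edge $e$ changes the type vector $\aegt$ according to exactly the same rules as in the rooted case (this is precisely the content of the discussion preceding Theorem~\ref{thm:pda:color}, together with the remark in Section~\ref{sec:unrooted} that the replacement matrix is unchanged). Hence, starting the unrooted PDA process from an arbitrary $T_m$ with $m\ge 6$ and setting $C_k=\aegt(T_{k})$ for $k\ge m$ produces a tenable urn with initial configuration $C_0=\aegt(T_m)$ and replacement matrix $R$ as in~\eqref{Matrix:R:PDA}. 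Since $R$ was already shown, in the proof of Theorem~\ref{thm:pda:color}, to satisfy (A1)--(A4) with principal eigenvalue $s=2$, principal left eigenvector $\mathbf{v}_1=\tfrac1{16}(2,2,1,3,1,7)$, and covariance matrix $\Sigma$ given by~\eqref{eq:sigma:pda:urn}, Theorems~\ref{thm1} and~\ref{thm2} apply directly, and the $n\mapsto n-m$ rescaling carried out verbatim in the proof of Theorem~\ref{thm:pda:color} yields $\aegt(T_n)/n\asconv\mathbf{v}_1$ and $\big(\aegt(T_n)-n\mathbf{v}_1\big)/\sqrt n\wconv\mathcal{N}(\bzero,\Sigma)$.

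For the ``in particular'' clause, which concerns the process started from the two-leaf tree $T_2$, the one new ingredient is to cope with the fact that for $n<6$ the six-type decomposition — and therefore the urn — is not defined, so one cannot initialise at $T_2$. I would bridge this by conditioning on the tree shape at $n=6$: there are exactly two unrooted shapes on six leaves, so writing $\aegt_6=\aegt(T_6)$ and $E_j=\{\aegt_6=\aegt_6^j\}$ for $j=1,2$, with $\aegt_6^1=(4,0,2,0,0,3)$ and $\aegt_6^2=(0,6,0,0,3,0)$ (see Fig.~\ref{fig:unrooted}), the events $E_1,E_2$ partition the sample space and both have positive probability. By the Markov property, conditionally on $E_j$ the process $\{T_n\}_{n\ge 6}$ is the unrooted PDA process started from a fixed six-leaf tree of shape $j$, to which the first part applies with $m=6$; it gives $\aegt(T_n)/n\asconv\mathbf{v}_1$ a.s.\ on each $E_j$, hence $\aegt(T_n)/n\asconv\mathbf{v}_1$ unconditionally, and likewise $\big(\aegt(T_n)-n\mathbf{v}_1\big)/\sqrt n\wconv\mathcal{N}(\bzero,\Sigma)$ on each $E_j$ — the deterministic-given-$E_j$ initial vector $\aegt_6^j$ contributing only an $O(1/\sqrt n)$ term, exactly as in the proof of Theorem~\ref{thm:pda:color}. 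Since the conditional weak limit is the same Gaussian on every block of the partition, the unconditional limit equals it too, because $\pp(X_n\le x)=\sum_j\pp(X_n\le x\mid E_j)\pp(E_j)$. Finally I would push these statements through the linear map $(x_1,\dots,x_6)\mapsto\big(x_1/2,(x_1+x_2)/2\big)$, using $\pf'_n=\aegt_{n,1}/2$ and $\ch'_n=(\aegt_{n,1}+\aegt_{n,2})/2$: the first two coordinates of $\mathbf{v}_1$ give the limit $(1/8,1/4)$, and contracting $\Sigma$ by this map — the same computation as in Corollary~\ref{cor:pda} — produces the $2\times2$ covariance $\tfrac1{64}\,\mathrm{diag}(3,4)$ appearing in the statement.

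Essentially all the linear algebra (verification of (A1)--(A4) for $R$, its diagonalisation, and the arithmetic producing $\mathbf{v}_1$, $\Sigma$ and the contracted $2\times2$ matrix) has already been done in Section~\ref{sec:results_pda}, so I expect no difficulty there. The only genuinely new point — and the one to get right — is the boundary bookkeeping at $n=6$: one must check that starting the urn analysis at the first well-defined level and conditioning on the finitely many shapes there is legitimate for both the strong law (immediate, since the a.s.\ limit is the same constant on each $E_j$) and the central limit theorem (where one uses the collapse-of-the-mixture observation above, valid precisely because the conditional limits coincide and the $\sqrt n$-scaling annihilates the effect of the $E_j$-measurable initial configuration). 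I do not anticipate any obstacle beyond this, which is why the paper is content to omit the full proof.
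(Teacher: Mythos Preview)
Your proposal is correct and follows exactly the route the paper intends: it explicitly omits the proof, stating that it is obtained by ``combining Theorem~\ref{thm:pda:color}, Corollary~\ref{cor:pda}, and an argument similar to the proof of Theorem~\ref{thm:yhk:color:unrooted},'' which is precisely your two-step plan (urn argument for $m\ge 6$ via (A1)--(A4) for the matrix in~\eqref{Matrix:R:PDA}, then conditioning on the two unrooted six-leaf shapes to handle the ``in particular'' clause). The only cosmetic slip is the indexing $C_k=\aegt(T_k)$ versus $C_0=\aegt(T_m)$; you want $C_k=\aegt(T_{m+k})$ for $k\ge 0$, but this does not affect the argument.
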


\section{Proofs of Theorems 1 and 2}
\label{sec:tech:proofs}
In this section, we shall present the proofs of Theorems~\ref{thm1} and~\ref{thm2}. To this end, it is more natural to consider $Y_n:  = C_nU$, a linear transform of $C_n$. Next we introduce
\begin{equation}
\label{eq:def:mds}
\mds_n
= Y_n - \ee[Y_n \vert \FF_{n-1}].
\end{equation}
For $1 \le j \le d$, consider the following numbers
\begin{equation}
\label{eq:def:b}
b_{n, n}(j) =1~~\mbox{and} \quad b_{n, k}(j) = \prod_{\ell=k}^{n-1} (1+ \lambda_j/t_{\ell}) ~~~ \mbox{ for $0 \le k <n$.}
\end{equation}
Moreover,  we introduce the following diagonal matrix for $0\le k \le n$:
\begin{equation}
\label{eq:def:transf}
\trf_{n,k}= \text{diag}\left(b_{n, k}(1), \ldots , b_{n,k}(d)\right).
\end{equation}
Then we have the following key observation:
\begin{equation}
\label{eq:main:decomp}
Y_n   = Y_0  \trf_{n,0} +  \sum_{k=1}^n \mds_k\,  \trf_{n,k}.
\end{equation}

To see that~\eqref{eq:main:decomp} holds, let $Q_k = \mathbf{I} + t_{k-1}^{-1} R$ for $1 \le k \le n$, where $\mathbf{I}$ is the identity matrix. Then we have
$$  \ee[C_n \vert \FF_{n-1}] =  C_{n-1} + t_{n-1}^{-1} C_{n-1}  R  = C_{n-1} \left[\mathbf{I} +t_{n-1}^{-1} R \right] = C_{n-1} Q_n.$$
As $C_k - E[C_k \vert \FF_{k-1}]=\mds_k U^{-1}$ for $1\le k \le n$,   we have
\begin{align}
C_n  &=(C_n - \ee[C_n \vert \FF_{n-1}] ) + C_{n-1}Q_n
=\mds_n U^{-1} + C_{n-1}Q_n \nonumber \\
& =C_0 (Q_1\cdots Q_n)+ \mds_nU^{-1} +\sum_{k=1}^{n-1} \mds_k U^{-1}  (Q_{k+1}\cdots Q_n). \label{eq:C:decomp}
\end{align}
Since
\begin{align}
U^{-1} \Big(\prod_{\ell=k+1}^n Q_{\ell} \Big) U
= \prod_{\ell=k}^{n-1} \big(U^{-1} \left( I+ t^{-1}_{\ell}R \right) U \big)
= \prod_{\ell=k}^{n-1} \left( I+ t_{\ell}^{-1} \Lambda \right)= \trf_{n,k}
\end{align}
holds for $1\le k \le n$ and $Y_n=C_nU$,
it is straightforward to see that~\eqref{eq:main:decomp} follows from transforming \eqref{eq:C:decomp} by a right multiplication of $U$.

\bigskip
Next, we shall present several properties concerning $\mds_k$. To this end, consider the sequence of random vectors $\tau_k=\chi_k-\ee[\chi_k|\FF_{k-1}]$ for $k\ge 1$. Then $\{\tau_k\}_{k\ge 1}$ is a martingale difference sequence (MDS) in that $\ee[\tau_k|\FF_{k-1}]=\bzero$ almost surely.  
Hence 
$\ee[\tau_k]=\ee\big[\ee[\tau_k|\FF_{k-1}]\big]=\bzero$.
Furthermore, since the entries in $\chi_k$ is either $0$ or $1$ and  $\ee[\chi_k|\FF_{k-1}]=\widetilde{C}_{k-1}$, the random vector $\tau_k$ is also bounded. As a bounded martingale difference sequence, $\tau_k$ is uncorrelated. To see it, assuming that $\ell<k$, then we have
$$
\ee[\tau^\top_\ell \tau_k ]
=\ee\big[\ee[\tau^\top_\ell \tau_k |\FF_{k-1}] \big]
=\ee\big[\tau^\top_\ell\ee[ \tau_k |\FF_{k-1}] \big]
=\ee[\tau^\top_\ell \bzero ]
=\bzero^\top \bzero,
$$
where the first equality follows the total law of expectation and the second from $\tau_{\ell}$ is $\mathcal{F}_{k-1}$-measurable. A similar argument shows $\ee[\tau_\ell \tau^\top_k ]=0$. Consequently, we have the following expression showing that distinct $\tau_k$ and $\tau_\ell$ are uncorrelated:
\begin{equation}
\label{eq:tau:uncor}
\ee[\tau^\top_k \tau_\ell]=\bzero^\top \bzero~~\mbox{and}~~\ee[\tau_k \tau^\top_\ell]=0~~\quad \mbox{if $k\not = \ell$}.
\end{equation}
Moreover, putting
\[\Gamma_{k}: =  \text{diag}\big( \widetilde{C}_{k}\big) -    \widetilde{C}_{k}^\top  \widetilde{C}_{k},\]
then we have
\[\ee[\Gamma_{k}] =  \text{diag}\big( \ee[\widetilde{C}_{k}]\big) -   \ee \big[ \widetilde{C}_{k}^\top  \widetilde{C}_{k}\big].\]
Consequently, we have
\begin{eqnarray}
\ee[\tau^\top_k \tau_k | \FF_{k-1}]
&=&
\ee[
\big(\chi_k-\ee[\chi_k|\FF_{k-1}]\big)^\top
\big(\chi_k-\ee[\chi_k|\FF_{k-1}]\big)
| \FF_{k-1}]   \nonumber\\
&=& \ee[
\big(\chi^\top_k-\widetilde{C}_{k-1}^\top\big)
\big(\chi_k-\widetilde{C}_{k-1}\big)
| \FF_{k-1}]  \nonumber\\
&=&
\ee[\chi^\top_k\chi_k|\FF_{k-1}]
-\widetilde{C}_{k-1}^\top\ee[\chi_k|\FF_{k-1}]
-\ee[\chi_k^\top|\FF_{k-1}]\widetilde{C}_{k-1}
+\widetilde{C}_{k-1}^\top \widetilde{C}_{k-1} \nonumber \\
&=&
\ee[\chi^\top_k\chi_k|\FF_{k-1}]-
\widetilde{C}_{k-1}^\top \widetilde{C}_{k-1}
=\Gamma_{k-1},
\label{eq:tau:conditional:variation}
\end{eqnarray}
where the last equality follows from~\eqref{eq:chi:prop}. This implies
\begin{eqnarray}
\ee[\tau^\top_k \tau_k]
=\ee\big[\ee[\tau^\top_k \tau_k | \FF_{k-1}] \big]
=\ee[\Gamma_{k-1}].
\label{eq:tau:var}
\end{eqnarray}

Note that $\mds_k$ is a `linear transform' of $\tau_k$ in that combining
\eqref{eq:main:urn:dynamic} and \eqref{eq:def:mds} leads to
\begin{eqnarray}
\mds_k
&=&\big( C_k - \ee[C_k \vert \FF_{k-1}] \big)U
=\big( C_{k-1}+\chi_kR - \ee[C_{k-1}+\chi_kR \vert \FF_{k-1}] \big)U
\nonumber \\
&=&\big( \chi_{k} - \ee[\chi_k \vert \FF_{k-1}] \big)RU
=\tau_kRU =\tau_kU\Lambda.
\label{eq:tau:mds}
\end{eqnarray}
Note this implies that $\mds_k$ is a martingale difference sequence in that $\ee[\mds_k|\FF_{k-1}]=\bzero=\ee[\mds_k]$.
Furthermore,  by~\eqref{eq:tau:conditional:variation},  ~\eqref{eq:tau:var}, and~\eqref{eq:tau:mds} we have
\begin{equation}
\label{eq:ksi:conditiona:variation}
\ee \big[\mds_k^\top \mds_k|\FF_{k-1}\big]=\Lambda  U^\top  \Gamma_{k-1}  U\Lambda
~\quad~
\mbox{for $k\ge 1$.}
\end{equation}
Together with~\eqref{eq:tau:uncor},  for all $k,l\ge 1$ we have 
\begin{equation}
\ee [\mds_k^\top \mds_k]=\Lambda  U^\top  \ee[\Gamma_{k-1}]  U\Lambda, 
\quad \mbox{and} \quad
\ee [\mds_k^\top \mds_l]=\bzero^\top \bzero~ \mbox{if  $k\not =l$.}
\label{eq:M:main}
\end{equation}
Since $\mathbf{u}_1=U \mathbf{e}^\top_1 =\mathbf{e}^\top$ is a right eigenvector of $R$ corresponding to $s$, by~\eqref{eq:tau:mds} we have
\begin{align}
\mds_k\mathbf{e}^\top_1 =
\tau_kRU \mathbf{e}^\top_1
=\tau_kR\mathbf{u}_1
=s\tau_k\mathbf{u}_1
=s\tau_k\mathbf{e}^\top
=0~\mbox{for $k\ge 1$},
\label{eq:mds:first:col}
\end{align}
where the last equality follows from $\chi_k \mathbf{e}^\top =1$ and $\ee[\chi_k|\FF_{k-1}]\mathbf{e}^\top=\widetilde{C}_{k-1}\mathbf{e}^\top=1$.

\bigskip

Note that for $n>1$ and 
$\rho<1$, we have
\begin{equation}
\label{eq:Rieman:sum:bound}
\frac{1}{n}\sum_{k=1}^{n-1} \left( \frac{n}{k} \right)^\rho \le \frac{1}{1-\rho},
~~\mbox{and}~~
\lim_{n\to \infty} \frac{1}{n} \sum_{k=1}^n  \left( \frac{n}{k} \right)^{\rho}  =
\int_0^1 x^{-\rho} dx  = \frac{1}{1-\rho}.
\end{equation}
Furthermore, we present the following result on the entries of
$\trf_{n,k}$, whose proof is elementary calculus and included in the appendix.

\begin{lemma}
	\label{lem:entries:B}
	Under assumptions (A2) and (A3), there exists a constant $K$ such that  
	\begin{equation}
	\label{eq:b:upper:bound}
	|b_{n,0}(j)|\le Kn^{\lambda_j/s}
	~\quad~\mbox{and}~\quad~
	|b_{n,k}(j)| \le K(n/k)^{\lambda_j/s}
	\end{equation}
	hold	for $1\le j \le \udim$ and $1\le k \le n$.
	Furthermore,  we have	
	\begin{equation}
	\label{eq:b:second:moment}
	\lim_{n\to \infty} \frac{1}{n} \sum_{k=1}^n  b_{n,k}(i) b_{n,k}(j)
	= \frac{s}{s- \lambda_i -\lambda_j}
	~\quad\quad~
	\mbox{for $2\le i\le j \le \udim$.}
	\end{equation}	
\end{lemma}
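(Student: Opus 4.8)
The plan is to prove the two estimates in~\eqref{eq:b:upper:bound} first and then use them to derive the limit~\eqref{eq:b:second:moment}. Throughout we use the fact that under (A3) the urn is balanced, so $t_\ell = t_0 + \ell s$ with $s=\lambda_1>0$; in particular $t_\ell = s\ell(1 + O(1/\ell))$ as $\ell\to\infty$, and $t_\ell$ is bounded below by a positive constant for all $\ell\ge 0$.

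\emph{Step 1: the upper bounds.} Take logarithms. For $1\le k\le n$ we have
\[
\log |b_{n,k}(j)| = \sum_{\ell=k}^{n-1} \log\!\left(1 + \frac{\lambda_j}{t_\ell}\right).
\]
Since $\lambda_j/t_\ell \to 0$, for $\ell$ large we have $\log(1+\lambda_j/t_\ell) = \lambda_j/t_\ell + O(1/\ell^2)$, and using $t_\ell = s\ell + t_0$ this equals $\frac{\lambda_j}{s\ell} + O(1/\ell^2)$. Summing from $k$ to $n-1$ and comparing the harmonic sum with $\int_k^n \mathrm{d}x/x = \log(n/k)$ gives
\[
\log|b_{n,k}(j)| = \frac{\lambda_j}{s}\log\frac{n}{k} + O(1),
\]
where the $O(1)$ is uniform in $1\le k\le n$ (the tail of the $O(1/\ell^2)$ series converges, and the finitely many small-$\ell$ terms where the expansion is not yet valid contribute a bounded amount because $t_\ell$ stays away from zero and from $-\lambda_j$ by tenability/(A2)). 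Exponentiating yields $|b_{n,k}(j)| \le K (n/k)^{\lambda_j/s}$ for a suitable constant $K$. The bound on $|b_{n,0}(j)|$ is the same computation with the lower summation limit $0$ instead of $k$, giving $\log|b_{n,0}(j)| = \frac{\lambda_j}{s}\log n + O(1)$, hence $|b_{n,0}(j)|\le K n^{\lambda_j/s}$.

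\emph{Step 2: the second-moment limit.} Fix $2\le i\le j\le d$. From Step 1 we may write $b_{n,k}(i)b_{n,k}(j) = (n/k)^{(\lambda_i+\lambda_j)/s}\bigl(1+\varepsilon_{n,k}\bigr)$ where, more precisely, $\log\bigl(b_{n,k}(i)b_{n,k}(j)\bigr) = \frac{\lambda_i+\lambda_j}{s}\log(n/k) + r_{n,k}$ with $r_{n,k}\to 0$ as $k\to\infty$ uniformly in $n\ge k$ (the error term $\sum_{\ell=k}^{n-1}O(1/\ell^2)$ is bounded by the tail $\sum_{\ell\ge k}O(1/\ell^2)\to 0$). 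Writing $\rho = (\lambda_i+\lambda_j)/s$, note $\rho < 1$ by (A2) (indeed $\lambda_1 > 2\lambda$ for every other eigenvalue gives $\lambda_i+\lambda_j < \lambda_1 = s$, and $\rho$ could also be negative, which only helps). Then
\[
\frac{1}{n}\sum_{k=1}^n b_{n,k}(i)b_{n,k}(j) = \frac{1}{n}\sum_{k=1}^n \left(\frac{n}{k}\right)^{\rho} + \frac{1}{n}\sum_{k=1}^n \left(\frac{n}{k}\right)^{\rho}\bigl(e^{r_{n,k}}-1\bigr).
\]
The first sum converges to $\int_0^1 x^{-\rho}\,\mathrm{d}x = \frac{1}{1-\rho} = \frac{s}{s-\lambda_i-\lambda_j}$ by~\eqref{eq:Rieman:sum:bound}. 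For the second sum, split the range at an index $M$: for $k>M$ the factor $|e^{r_{n,k}}-1|$ is uniformly small, so that part is bounded by $(\sup_{k>M}|e^{r_{n,k}}-1|)\cdot\frac{1}{n}\sum_{k=1}^n(n/k)^\rho \le \frac{\sup_{k>M}|e^{r_{n,k}}-1|}{1-\rho}$, again by~\eqref{eq:Rieman:sum:bound}; and for $k\le M$ the terms $(n/k)^\rho|e^{r_{n,k}}-1|$ are each $O(n^{\rho})$ (or $O(1)$ if $\rho\le 0$), so divided by $n$ they vanish as $n\to\infty$ for fixed $M$. Letting $n\to\infty$ and then $M\to\infty$ shows the second sum tends to $0$, which gives~\eqref{eq:b:second:moment}.

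\emph{Main obstacle.} The only delicate point is making the error terms genuinely \emph{uniform} in the range $1\le k\le n$: the expansion $\log(1+\lambda_j/t_\ell)=\lambda_j/(s\ell)+O(1/\ell^2)$ and the Riemann-sum comparison are only valid once $\ell$ (equivalently $t_\ell$) is large enough, so one must separately control the finitely many initial terms and verify they contribute a bounded additive constant (using tenability and (A2) to ensure $1+\lambda_j/t_\ell>0$ stays bounded away from $0$). Once that bookkeeping is done, everything else is the elementary calculus promised in the statement; since the paper defers the full proof to the appendix, the sketch above is at the level of detail appropriate here.
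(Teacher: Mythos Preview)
Your argument is correct in substance and follows a route different from the paper's. Where the paper expresses $b_{n,k}(j)$ as a ratio of Gamma functions, $b_{n,k}(j)=F_k^n(t_0/s,\lambda_j/s)$ with
\[
F_m^n(\ell,\lambda)=\frac{\Gamma(n+\ell+\lambda)\,\Gamma(m+\ell)}{\Gamma(m+\ell+\lambda)\,\Gamma(n+\ell)},
\]
and then invokes the classical asymptotic $\Gamma(x+y)/(x^y\Gamma(x))\to 1$ to obtain $\lim_{m\to\infty}\sup_{n\ge m}(m/n)^{\lambda}F_m^n(\ell,\lambda)=1$, you bypass special functions entirely via the Taylor expansion of $\log(1+\lambda_j/t_\ell)$ and a comparison with the harmonic sum. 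Your route is more elementary and produces exactly the uniform $O(1)$ control needed for both~\eqref{eq:b:upper:bound} and the splitting argument for~\eqref{eq:b:second:moment}; the paper's Gamma-function approach, on the other hand, makes the limit more explicit and isolates the small-$k$ bookkeeping into a clean three-case analysis.

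One correction to your ``Main obstacle'' paragraph: neither tenability (A1) nor (A2) ensures $1+\lambda_j/t_\ell>0$ for small $\ell$. In the paper's own applications this factor is genuinely negative---for instance in the YHK urn one has $\lambda_4=-3$, $s=1$, $t_0=2$, giving $1+\lambda_4/t_0=-1/2$. What actually saves the argument is simply that there are only finitely many such $\ell$, each contributing a fixed factor to $|b_{n,k}(j)|$ (or zero, in which case the bound is trivial); no appeal to (A1) or (A2) is needed beyond $t_\ell=t_0+\ell s\to\infty$. Your Step~1 already handles this correctly by working with $|b_{n,k}(j)|$ and absorbing the initial terms into the $O(1)$ constant, so the inaccuracy is only in the commentary, not in the proof itself.
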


With the last lemma, we have the following observation that will be key in the proof of Theorem~\ref{thm2}. 
\begin{corollary}
	\label{cor:conv:array:product}
	Assume that $\{Z_n\}$ is a sequence of random variables such that
	$$
	Z_n \pconv Z
	$$
	for a random variable $Z$. Then under assumptions (A2)-(A3),  for $2\le i \le j \le \udim$ we have
	\begin{equation}
	\label{eq:pconv:transform:cor}
	\frac{1}{n}\sum_{k=1}^n b_{n,k}(i)b_{n,k}(j)Z_k \pconv  \frac{s}{s- \lambda_i -\lambda_j} Z
	~\quad~
	\mbox{as $n\to \infty$}.
	\end{equation}
\end{corollary}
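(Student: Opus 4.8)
The plan is to read \eqref{eq:pconv:transform:cor} as a Toeplitz (weighted-average) limit theorem for the triangular array $a_{n,k}:=\tfrac1n\,b_{n,k}(i)\,b_{n,k}(j)$, $1\le k\le n$. Set $\rho:=(\lambda_i+\lambda_j)/s$ and $c:=\tfrac{s}{s-\lambda_i-\lambda_j}$. Since $i,j\ge2$, assumption~(A2) forces $\lambda_i,\lambda_j<s/2$, so $\rho<1$ and $s-\lambda_i-\lambda_j>0$ (hence $c$ is finite). The first task is to record three properties of the array. From $|b_{n,k}(\ell)|\le K(n/k)^{\lambda_\ell/s}$ in \eqref{eq:b:upper:bound} one gets $|a_{n,k}|\le K^2 n^{\rho-1}k^{-\rho}$, whence (i)~$a_{n,k}\to0$ as $n\to\infty$ for each fixed $k$, and (ii)~$\sum_{k=1}^n|a_{n,k}|\le K^2\big(1+\tfrac1n\sum_{k=1}^{n-1}(n/k)^\rho\big)\le K^2\big(1+\tfrac1{1-\rho}\big)=:M<\infty$, uniformly in $n$, using \eqref{eq:Rieman:sum:bound}; and \eqref{eq:b:second:moment} gives (iii)~$\sum_{k=1}^n a_{n,k}\to c$.

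Given (i)--(iii), the deterministic Toeplitz lemma applies: for any real sequence $z_k\to z$ one has $\sum_{k=1}^n a_{n,k}z_k\to cz$ (split $z_k=z+(z_k-z)$; the constant part is handled by (iii), and for the tail of $\sum_k a_{n,k}(z_k-z)$ one chooses $K$ with $|z_k-z|<\epsilon$ for $k>K$, bounds that tail by $\epsilon M$ via (ii), and lets $n\to\infty$ so the $K$ head terms vanish by (i)). Hence, if $Z_k\to Z$ almost surely, applying this pointwise on the convergence set gives $\sum_{k=1}^n a_{n,k}Z_k\to cZ$ almost surely, a fortiori in probability. In the applications of this corollary the $Z_k$ are entries of matrices such as $\Gamma_k=\mathrm{diag}(\widetilde C_k)-\widetilde C_k^{\!\top}\widetilde C_k$ built from the stochastic vectors $\widetilde C_k$, for which Theorem~\ref{thm1} supplies exactly this almost sure convergence, so this route already covers what we need.

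To deduce the statement literally from $Z_k\pconv Z$, the extra input I would use is that these $Z_k$ are uniformly integrable (here automatic, since they are uniformly bounded): then $Z_k\pconv Z$ improves to $\ee|Z_k-Z|\to0$, and $\ee\big|\sum_{k=1}^n a_{n,k}(Z_k-Z)\big|\le\sum_{k=1}^n|a_{n,k}|\,\ee|Z_k-Z|\to0$ by the Toeplitz estimate of paragraph~2 applied to the deterministic null sequence $\ee|Z_k-Z|$; combined with $(\sum_k a_{n,k}-c)Z\to0$ a.s.\ from (iii), this yields $\sum_k a_{n,k}Z_k\pconv cZ$. This last step is the only genuine obstacle: convergence in probability by itself is not enough for such a weighted average (already $\tfrac1n\sum_{k\le n}Z_k$ can fail to converge in probability when merely $Z_k\pconv Z$, e.g.\ $Z_k=k\,\ind_{A_k}$ with independent $A_k$ and $\pp(A_k)=1/k$), so the argument must lean on the boundedness / uniform integrability that the applications provide; everything else is the routine verification of (i)--(iii) from Lemma~\ref{lem:entries:B} and \eqref{eq:Rieman:sum:bound}.
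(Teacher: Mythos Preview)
Your argument is essentially the paper's own: set $a_{n,k}=\frac1n b_{n,k}(i)b_{n,k}(j)$, record the three Toeplitz properties from Lemma~\ref{lem:entries:B} and~\eqref{eq:Rieman:sum:bound}, split $Z_k=Z+(Z_k-Z)$, and control the second piece via $\beta_k:=\ee|Z_k-Z|\to0$. The paper carries out exactly this computation, writing out the $\epsilon$--$N$ bookkeeping where you invoke the Toeplitz lemma; your packaging is cleaner but the content is the same.

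Where you differ is in being more honest than the paper: you flag that $Z_k\pconv Z$ alone does \emph{not} give $\ee|Z_k-Z|\to0$, and you correctly note that uniform integrability (automatic here since the $Z_k$ built from the stochastic vectors $\widetilde C_k$ are bounded) is the missing ingredient. The paper simply asserts ``$\{\beta_k\}$ converges to $0$'' without comment, so your counterexample and your observation that the applications supply boundedness are a genuine improvement in rigour rather than a deviation in method. Your alternative route through almost sure convergence (available via Theorem~\ref{thm1}) is not in the paper but is a nice shortcut for the actual use case.
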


\begin{proof}
	Fix a pair of indexes $2\le i \le j \le \udim$. For simplicity, we put $a_{n,k}=b_{n,k}(i)b_{n,k}(j)$. Furthermore, let $\rho=(\lambda_i+\lambda_j)/s$, then $\rho<1$ and $1-\rho=(s-\lambda_i-\lambda_j)/s>0$. Then by Lemma~\ref{lem:entries:B}  we have 
	\begin{equation}
	\label{eq:conv:array:product:bound}
	\lim_{n\to \infty} \frac{1}{n}\sum_{k=1}^n a_{n,k}=\frac{1}{1-\rho},
	~\quad~
	\mbox{and}
	~\quad~
	|a_{n,k}| \le K\Big(\frac{n}{k}\Big)^\rho
	~\quad~
	~\mbox{for all $n\ge 1$ and $1\le k\le n$}.
	\end{equation}
	Furthermore, let $N_0$ be the smallest integer greater than 1 such that both
	$N_0> -(\lambda_i+t_0)/s$ and $N_0>-(\lambda_j+t_0)/s)$ hold. Then we have 	
	$a_{n,k}>0$ for all $n\ge k \ge N_0$. 
	
	We shall next show that
	\begin{equation}
	\label{eq:eq:pconv:transform:pf:1}
	\frac{1}{n}	\sum_{k=1}^n a_{n,k} \ee[|Z_k-Z|] \to 0.
	\end{equation}
	For simplicity, put $\beta_k:=\ee[|Z_k-Z|]$ for $k\ge 1$. Then $\{\beta_k\}_{k\ge 1}$  is a sequence of non-negative numbers which converges to $0$. Thus there exists a constant $K_1>0$ such that $\beta_k<K_1$ holds for all $k\ge 1$. Next, fix an arbitrary number $\epsilon>0$. By~\eqref{eq:conv:array:product:bound}, let $N_1=N_1(\epsilon)$ be the smallest integer greater than $N_0$ so that    so that $n^{-1}\sum_{k=1}^n a_{n,k}<\frac{1}{1-\rho}+\epsilon$ holds for all $n>N_1$. Since $1-\rho> 0$, the number 
	$\epsilon':=\frac{\epsilon(1-\rho)}{2(1+\epsilon(1-\rho))}$ is greater than $0$. Let $N_2$ be the smallest positive integer greater than $N_1$ so that $\beta_k<\epsilon'$ holds for all $k>N_2$. Now let $N$ be the smallest positive integer greater than $N_2$ so that
	$N\ge (2(K_1+\epsilon')KN_2/\epsilon)^{1/(1-\rho)}$ and $N\ge N_2(2(K_1+\epsilon')K/\epsilon)^{1/(1-\rho)}$ both hold. 
	Then for $n>N$ we have
	\begin{align*}
	\Big| \frac{1}{n}	\sum_{k=1}^n a_{n,k}\beta_k \Big|
	&\le \Big| \frac{1}{n}	\sum_{k=1}^{N_2} a_{n,k}\beta_k\Big|+\frac{1}{n}	\sum_{k=1+N_2}^n a_{n,k}\beta_k  
	\le \frac{K_1}{n}	\sum_{k=1}^{N_2} |a_{n,k}|+\frac{\epsilon'}{n}	\sum_{k=1+N_2}^n a_{n,k}\\
	& = \frac{K_1}{n}	\sum_{k=1}^{N_2} |a_{n,k}|-\frac{\epsilon'}{n}\sum_{k=1}^{N_2} a_{n,k} + \frac{\epsilon'}{n}	\sum_{k=1}^n a_{n,k}
	\\  & \le \frac{K_1+\epsilon'}{n}	\sum_{k=1}^{N_2} |a_{n,k}|+\frac{\epsilon'}{n}	\sum_{k=1}^n a_{n,k}\\
	&\le \frac{(K_1+\epsilon')KN_2\max(n^{\rho},(n/N_2)^{\rho})}{n}+\frac{\epsilon'}{n}	\sum_{k=1}^n a_{n,k} \\
	&\le \frac{\epsilon}{2}+\epsilon'\Big(\frac{1}{1-\rho}+\epsilon\Big) = \epsilon,
	\end{align*}
	from which~\eqref{eq:eq:pconv:transform:pf:1} follows. Here the first inequality follows from the triangle inequality and that $a_{n,k}\beta_k>0$ holds for $n\ge k >N_2\ge M$, the second inequality holds since $0\le \beta_k<K_1$ for $k\ge 1$ and $\beta_k<\epsilon'$ for $k>N_2$. Next, the third inequality holds since we have $\epsilon'(a_{n,k}+|a_{n,k}|)
	\ge 0$  for $1\le k \le n$. Furthermore,  the fourth inequality holds because by ~\eqref{eq:conv:array:product:bound} we have $|a_{n,k}|\le K\max(n^{\rho},(n/N_2)^{\rho})$ for $1\le k \le N_2$, and the last inequality follows from $2(K_1+\epsilon')KN_2n^\rho\le \epsilon n$ and $2(K_1+\epsilon')KN^{1-\rho}_2n^\rho\le \epsilon n$ in view of $n>N$.  	
	
	Finally, by~\eqref{eq:conv:array:product:bound} and~\eqref{eq:eq:pconv:transform:pf:1} it follows that
	\begin{equation}
	\frac{1}{n}\sum_{k=1}^n a_{n,k}Z \pconv \frac{1}{1-\rho}Z
	~\quad~\mbox{and}~\quad~
	\frac{1}{n}	\sum_{k=1}^n a_{n,k} (Z_k-Z) \pconv 0.
	\end{equation} 
	Therefore, we can conclude that 
	$$
	\frac{1}{n} \sum_{k=1}^n a_{n,k}Z_k=\frac{1}{n}\sum_{k=1}^n a_{n,k} Z+
	\frac{1}{n}\sum_{k=1}^n a_{n,k} (Z_k-Z)
	\pconv \frac{1}{1-\rho}Z,	
	$$
	as required.
	\epf
\end{proof}

\subsection{Proof of Theorem \ref{thm1}}

\begin{proof}
	
	
	Recall that $Y_n  = C_nU$ for $n\ge 1$. Hence, it is sufficient to show that
	\begin{equation}
	\label{eq:main:pf:lln}
	n^{-1}  Y_n  \asconv \pev \, \mathbf{e}_1
	\end{equation}
	because $\pev \: \mathbf{e}_1U^{-1}= \pev\: \mathbf{v}_1$ and $n^{-1}C_n=n^{-1}Y_nU^{-1}$. Furthermore, as the sequence of random vectors $n^{-1}C_n$ is bounded, its $L^r$ convergence follows from the almost sure convergence.
	
	To establish~\eqref{eq:main:pf:lln}, we restate the following decomposition from ~\eqref{eq:main:decomp} as below:
	\begin{equation}
	\label{eq:main:decomp:lln}
	Y_n   = Y_0  \trf_{n,0} +  \sum_{k=1}^n \mds_k\,  \trf_{n,k},
	\end{equation}
	where $\{\mds_k\}$ is the martingale difference sequence in~\eqref{eq:def:mds} and $\trf_{n,k}$ is the diagonal matrix in~\eqref{eq:def:transf}.
	
	Next we claim that
	\begin{equation}
	\label{eq:expect:asconv:lln}
	n^{-1}\ee[Y_n]\longrightarrow \pev\: \mathbf{e}_1~~\mbox{as $n\to \infty$}.
	\end{equation}
	Indeed, since $\ee[\mds_k]=\mathbf{0}$ implies
	$\ee[\mds_k\,  \trf_{n,k}]=\ee[\mds_k]\trf_{n,k}=\mathbf{0}$, by~\eqref{eq:main:decomp:lln} we have
	$\ee[Y_n]=Y_0  \trf_{n,0}.$
	Therefore the $j$-th entry in $\ee[Y_n]$, denoted by $y_{n,j}$,  is given by
	$$
	y_{n,j}=
	\ee[Y_n]\mathbf{e}_j^\top=Y_0  \trf_{n,0} \mathbf{e}_j^\top
	=b_{n,0}(j) Y_0  \mathbf{e}_j^\top
	~\mbox{for $1\le j \le \udim$}.
	$$
	When $j=1$, we have
	$$
	y_{n,1}=b_{n,0}(1) Y_0  \mathbf{e}_1^\top=(\mass_n/\mass_0)Y_0  \mathbf{e}_1^\top
	=(\mass_n/\mass_0)C_0 U \mathbf{e}_1^\top
	=(\mass_n/\mass_0)C_0 \mathbf{u}_1
	=(\mass_n/\mass_0)\mass_0=\mass_n,
	$$
	where we use the fact that $\mathbf{u}_1=\mathbf{e}^\top$ and hence
	$\mass_0=C_0 \mathbf{u}_1$. Therefore we have
	$y_{n,1}/n=\mass_n/n \to \pev$ as $n\to \infty$.
	On the other hand, for $2\le j \le \udim$, we have
	$$
	|y_{n,j}|=|b_{n,0}(j) Y_0  \mathbf{e}_j^\top|
	\le K_1|b_{n,0}(j)|
	\le K n ^{\lambda_j/\pev},
	$$
	where the last inequality follows from Lemma~\ref{lem:entries:B}. Since $\lambda_j<\pev$, it follows that $y_{n,j}/n \to 0$ as $n\to \infty$.
	This completes the proof of~\eqref{eq:expect:asconv:lln}.
	
	\bigskip
	For simplicity, let $Z_{n}:=Y_{n} - E[Y_{n}]$. Then we have $Y_n=Z_n+\ee[Y_n]$, by \eqref{eq:expect:asconv:lln} it follows that
	to establish \eqref{eq:main:pf:lln}, it remains to show that
	\begin{equation}
	\label{eq:Z:asconv}
	Z_n/n \asconv \mathbf{0},
	\end{equation}
	Denote the $j$-th entry in $Z_n$ by $Z_{n,j}$, then from~\eqref{eq:main:decomp:lln}   we have
	\begin{equation}
	\label{eq:Z:componet}
	Z_{n,j}
	=\sum_{k=1}^n  (\mds_k  \trf_{n,k} )\mathbf{e}^\top_j
	=  \sum_{k=1}^n b_{n,k}(j) \mds_k \mathbf{e}^\top_j.
	\end{equation}
	Since~\eqref{eq:Z:asconv} is equivalent to
	\begin{equation}
	\label{eq:Z:component:asconv}
	\frac{Z_{n,j}}{n} \asconv 0
	~~~~~~~~\mbox{~~~~for $1\le j \le \udim$},
	\end{equation}
	the remainder of the proof is devoted to establishing~\eqref{eq:Z:component:asconv}.
	\bigskip
	
	It is straightforward to see that~\eqref{eq:Z:component:asconv} holds for $j=1$ because by~\eqref{eq:mds:first:col} and~\eqref{eq:Z:componet} we have
	$$
	Z_{n,1}=\sum_{k=1}^n b_{n,k}(j) \mds_k \mathbf{e}^\top_1
	=0.
	$$
	Thus in the remainder of the proof, we may assume that $2\le j \le \udim$ holds.
	Note that
	\begin{eqnarray*}
		\ee\big[ Z_{n,j}^2\big]
		&=&  \ee \bigg [\bigg(\sum_{k=1}^n  b_{n,k}(j) \mds_k \mathbf{e}^\top_j    \bigg)^2 \bigg]
		=
		\ee \big [ \sum_{k,l=1}^n  b_{n,k}(j) b_{n,l}(j) \mathbf{e}_j \mds^\top_k  \mds_{l} \mathbf{e}^\top_j   \big] \\
		&=& \ee \big [ \sum_{k=1}^n  b^2_{n,k}(j) \mathbf{e}_j \mds_k^\top  \mds_{k} \mathbf{e}^\top_j  \big]
		=  \sum_{k=1}^n  b^2_{n,k}(j) \ee \big [\mathbf{e}_j \mds_k^\top  \mds_{k} \mathbf{e}^\top_j  \big].
	\end{eqnarray*}
	Here the third equality follows form~\eqref{eq:M:main}.  As $\ee[\mathbf{e}_j \mds^\top_k  \mds_{k} \mathbf{e}^\top_j]$,  the $(j,j)$-entry of matrix $\ee[\mds^\top_k  \mds_{k}]$,  is  bounded above by a constant $K_1$ in view of~\eqref{eq:M:main}, there exists constants $K_2$ and $K$ so that 
	\begin{align*}
	\ee\big[ Z_{n,j}^2\big]
	&\le K_1  \sum_{k=1}^n |b_{n,i}(j)|^2
	\leq  K_2 \ \sum_{k=1}^n  \left(\frac{n}{k} \right)^{2  \lambda_{j}/s}
	=K_2+  K_2n \ \sum_{k=1}^{n-1}  \frac{1}{n}\left(\frac{k}{n} \right)^{-2  \lambda_{j}/s} 
	\\&
	\le K_2+ \frac{K_2n}{1-2 \lambda_{j}/s} 
	\le Kn
	\end{align*}
	holds for all $n\ge 1$. Here the second inequality follows from Lemma~\ref{lem:entries:B} 
	and the third one from~\eqref{eq:Rieman:sum:bound} in view of $\lambda_j < s/2$ for  $2 \le j \le d$.  
	
	Since $\ee(Z_{n,j})=0$,  for $\epsilon >0$ using the Chebychev inequality we get
	\begin{equation}
	\label{eq:chebychev}
	\pp\left ( \left |Z_{n,j} \right | >n \epsilon \right) \leq \frac{K}{n \epsilon^2}
	~\quad\mbox{for all $n\ge 1$}.
	\end{equation}
	Consider the subsequence $Z'_{n,j}$ of $Z_{n,j}$ with $Z'_{n,j}=Z_{n^2, j}$ for $ n \ge 1$.   Then for $\epsilon >0$ we have
	\begin{align*}
	\sum_{n=1}^\infty \pp\left ( \frac{|Z'_{n,j}|}{n^2} > \epsilon \right) =
	\sum_{n=1}^\infty \pp\left ( \left |Z_{n^2,j}\right | > n^2\epsilon \right)
	\leq \sum_{n=1}^\infty  \frac{K  } {n^2 \epsilon^2} <\infty,
	\end{align*}
	where the first inequality follows from~\eqref{eq:chebychev}.
	Thus,  by the Borel-Cantelli Lemma, it follows that
	\begin{equation}
	\label{eq:sub:asconv}
	{n^{-2}}{Z'_{n,j}} \asconv 0.
	\end{equation}
	
	
	Next, consider
	\begin{align*}
	\Delta_{n,j}  &:=
	\max_{n^2 \le k < (n+1)^2 } | Z_{k,j} -Z'_{n,j} |  =
	\max_{n^2 \le k < (n+1)^2 } | Z_{k,j} -Z_{n^2,j} |
	= \max_{1 \leq k \leq 2n } |Z_{n^2+k,j} -Z_{n^2,j} |.
	\end{align*}
	Since for each $\ell> 0$, elements of $\chi_\ell$ and $RU$ are all bounded above, there exists a constant $K$ independent of $\ell$ and $j$ so that
	\begin{align*}
	|Z_{\ell+1,j}-Z_{\ell,j}|
	&=
	|\big((Y_{\ell+1}-\ee[Y_{\ell+1}])-(Y_{\ell}-\ee[Y_{\ell}])\big)U \mathbf{e}^\top_j| \\
	&=|\big(Y_{\ell+1}-Y_{\ell}\big)-\big(\ee[Y_{\ell+1}-Y_{\ell}]\big)U
	\mathbf{e}^\top_j| 
	= |\big(\chi_{\ell+1}-\ee[\chi_{\ell+1}] \big) RU\mathbf{e}^\top_j| \le K.
	\end{align*}
	Consequently, we have
	\begin{align*}
	\Delta_{n,j}& =\max_{0 \leq k \leq 2n } |Z_{n^2+k,j} -Z_{n^2,j} |
	\le \max_{1 \leq k \leq 2n } \sum_{\ell=1}^k |Z_{n^2+\ell,j} -Z_{n^2+\ell-1,j} |
	\le \max_{1 \leq k \leq 2n } \sum_{\ell=1}^k K =2nK,
	\end{align*}
	and hence
	\begin{equation}
	\label{eq:diff:asconv}
	n^{-2}{\Delta_{n,j}} \asconv 0.
	\end{equation}
	Now, for each $k>0$, considering the natural number
	$n$ with  $n^2 \le k < (n+1)^2$, then we have
	\begin{equation}
	\label{eq:sub:bound}
	\frac{\left|Z_{k,j}\right|}{k} \le  \frac{\left| Z_{k,j}   -Z_{n^2,j}\right| }{k} +\frac{\left|Z_{n^2,j}\right|}{k}
	\leq  \frac{\Delta_{n,j}}{n^2}+\frac{\left|Z_{n^2,j}\right|}{n^2}=
	\frac{\Delta_{n,j}}{n^2}+\frac{\left|Z'_{n,j}\right|}{n^2}.
	\end{equation}
	Note that when $k\to \infty$, the natural number $n$ satisfying  $n^2 \le k < (n+1)^2$ also approaches to $\infty$.  Thus combining~\eqref{eq:sub:asconv},~\eqref{eq:diff:asconv}, and~\eqref{eq:sub:bound} leads to
	\begin{equation}
	\label{eq:bc}
	k^{-1}{Z_{k,j}} \asconv 0  \qquad \text{when $k\to \infty$},
	\end{equation}
	which completes the proof of~\eqref{eq:Z:component:asconv}, and hence also the theorem.
	\epf
\end{proof}
\vspace{1em}

\subsection{Proof of Theorem \ref{thm2}}

\begin{proof}
	For each $n\ge 1$, consider the following two sequences of random vectors:
	$$
	X_{n,k}: = n^{-1/2} \mds_k   \trf_{n,k} \quad
	~\mbox{and} \quad
	S_{n,k}: =  \sum_{\ell=1}^k X_{n,\ell}
	~~\quad~~\mbox{for $1\le k \le n$,}
	$$
	where $\{\mds_k\}_{k\ge 1}$ is the martingale difference sequence in~\eqref{eq:def:mds} and $\trf_{n,k}$ is the diagonal matrix in~\eqref{eq:def:transf}.
	Then for each $n\ge 1$, the sequence $\{X_{n,k}\}_{1\le k \le n}$ is a  martingale difference sequence, and  $\{S_{n,k}\}_{1\leq k \leq n}  $ is a  mean zero martingale.
	Recalling that $Y_n  = C_nU$, then by~\eqref{eq:main:decomp} we have
	\begin{equation}
	\label{eq:S:Y}
	S_{n,n}= n^{-1/2}\sum_{k=1}^n \mds_k   \trf_{n,k} =
	n^{-1/2}\big(  Y_n - \ee[Y_n ]  \big).
	\end{equation}
	
	Consider the normal distribution $\mathcal{N}({\mathbf 0},   \widetilde{\Sigma})$ is with mean vector $\mathbf{0}$ and variance-covariance matrix
	\begin{equation}
	\widetilde{\Sigma}: = \ \sum_{i, j=2}^d \frac{s\lambda_i \lambda _j  \mathbf{u}_i^\top \text{diag}(\mathbf{v}_1) \mathbf{u}_j  }{s-\lambda_i -\lambda_j}  \mathbf{e}_i^\top \mathbf{e}_j.
	\end{equation}
	One key step in our proof is to show that
	\begin{equation}
	\label{eq:S:normal}
	S_{n,n} \wconv   \mathcal{N}({\mathbf 0},   \widetilde{\Sigma}).
	\end{equation}
	
	Before establishing~\eqref{eq:S:normal}, we shall first show that the theorem follows from it. To this end, we claim that
	\begin{equation}
	\label{eq:Z:0}
	Z_n:= n^{-1/2} \left( \ee[Y_n]-ns \mathbf{e}_1 \right) \asconv \mathbf{0} ~~\mbox{ with  $n\to \infty$.}
	\end{equation}
	Indeed, we have $Z_{n}\mathbf{e}^\top_1=n^{-1/2}(t_n- ns)=n^{-1/2}t_0 \to 0$.
	Furthermore,  by Lemma~\ref{lem:entries:B}  there exists a constant $K$ such that
	\begin{eqnarray*}
		|Z_n\mathbf{e}^\top_j| = n^{-1/2}|Y_{0,j} b_{n,0}(j)|=n^{-1/2}Y_{0,j} |b_{n,0}(j)|
		\leq n^{-1/2}Y_{0,j}  K n ^{\lambda_j/s}
		~~\quad~~\mbox{ for $2\le j \le d$.}
	\end{eqnarray*}
	As $\lambda_j/s <1/2$, it follows that $|Z_{n}\mathbf{e}^\top_j| \to 0$ for all $1\le j \le d$, and hence~\eqref{eq:Z:0} holds.
	Consequently, we have
	\begin{equation}
	\label{eq:Y:normality}
	n^{-1/2} \left( Y_n - ns \mathbf{e}_1\right)= n^{-1/2} \left( Y_n - E[Y_n] \right)  + Z_n
	=S_{n,n}+Z_n
	\wconv  N({\mathbf 0},   \widetilde{\Sigma}).
	\end{equation}
	Here the second equality follows from~\eqref{eq:S:Y}; convergence in distribution follows from the Slutsky theorem (see,e.g., ..to add) in view of~\eqref{eq:S:normal} and \eqref{eq:Z:0}.
	Since  $n^{-1/2} ( C_n - ns \mathbf{v}_1)
	=n^{-1/2} \left( Y_n - ns \mathbf{e}_1\right) V$ with $V=U^{-1}$,
	by~\eqref{eq:Y:normality} and the fact that a linear transform of a normal vector is also normal (a citation, todo) we have
	\begin{equation}
	n^{-1/2} ( C_n - ns \mathbf{v}_1) \wconv N({\mathbf 0},   \Sigma),
	\end{equation}
	where
	\begin{equation}
	\Sigma = V^\top \widetilde \Sigma \, V=V^\top \Big( \sum_{i, j=2}^d \frac{s\lambda_i \lambda _j  \mathbf{u}_i^\top \text{diag}(\mathbf{v}_1) \mathbf{u}_j  }{s-\lambda_i -\lambda_j}  \mathbf{e}_i^\top \mathbf{e}_j \Big) V
	=\sum_{i, j=2}^d \frac{s\lambda_i \lambda _j  \mathbf{u}_i^\top \text{diag}(\mathbf{v}_1) \mathbf{u}_j  }{s-\lambda_i -\lambda_j}  \mathbf{v}_i^\top \mathbf{v}_j,
	\end{equation}
	which shows indeed that the theorem follows from~\eqref{eq:S:normal}.

	\bigskip
	In the remainder of the proof we shall establish~\eqref{eq:S:normal}. To this end, considering
	$$
	\Phi(n):=\sum_{k=1}^n
	\ee\big[X_{n,k}^\top X_{n,k} |\FF_{k-1}\big]
	=\frac{1}{n}   \sum_{k=1}^n    \trf_{n,k} \ee [  \mds_k ^\top \mds_k |\FF_{k-1} ]
	\trf_{n,k},
	$$
	and we shall next show that 
	\begin{equation}
	\label{eq:pconv:xcv:phi}
	\Phi(n) \pconv \widetilde{\Sigma}.
	\end{equation}
	Let $\Gamma = \text{diag}(\mathbf{v}_1) -  \mathbf{v}_1^\top \mathbf{v}_1$.
	Note that for  $ 2 \le i,j\le d$, we have $\mathbf{v}_1 \mathbf{u}_i = 0=\mathbf{v}_1 \mathbf{u}_j $ in view of~\eqref{eq:u:v:innerproduct}, and 
	hence
	\begin{align*}
	\frac{s\lambda_i \lambda_j \mathbf{u}_i^\top \Gamma \mathbf{u}_j }{s- \lambda_i-\lambda_j}
	=  \frac{s\lambda_i \lambda_j \mathbf{u}_i^\top  (\text{diag}(\mathbf{v}_1) - \mathbf{v}_1^\top\mathbf{v}_1 )  \mathbf{u}_j }{s- \lambda_i-\lambda_j}
	=\frac{s\lambda_i \lambda_j \mathbf{u}_i^\top \text{diag}(\mathbf{v}_1)  \mathbf{u}_j }{s- \lambda_i-\lambda_j}.
	\end{align*}
	Therefore~\eqref{eq:pconv:xcv:phi} is equivalent to 
	\begin{equation}
	\mathbf{e}_i \Phi(n) \mathbf{e}_j^\top \pconv \begin{cases}
	\dfrac{s\lambda_i \lambda _j \ \mathbf{u}_i^\top \Gamma \mathbf{u}_j}{s- \lambda_i-\lambda_j}
	\  , &~ 2 \le i, j \le \udim, \\
	0, & ~ \text{if  $i=1$ or $j=1$},
	\end{cases}
	\label{eq:pconv:xcv:point}
	\end{equation}
	
	Since  $\trf_{n,k}$ is a diagonal matrix and $\mathbf{e}_1\mds_k^\top=0$ in view of~\eqref{eq:mds:first:col}, this implies
	$$
	\mathbf{e}_1 \Phi(n)
	=\frac{1}{n}   \sum_{k=1}^n   \mathbf{e}_1 \trf_{n,k} \ee [  \mds_k ^\top \mds_k \,|\, \FF_{k-1}]   \trf_{n,k}
	=\frac{1}{n}   \sum_{k=1}^n    b_{n,k}(1) \ee [  \mathbf{e}_1\mds_k ^\top \mds_k \,|\, \FF_{k-1}]   \trf_{n,k}
	=\bzero.
	$$
	A similar argument shows $\Phi_{n}\mathbf{e}^\top_1=\bzero$, and hence~\eqref{eq:pconv:xcv:point} holds for $i=1$ or $j=1$.
	It remains to consider the case $2\le i,j\le \udim$.
	Since
	$$
	\widetilde{C}_k \mconv \mathbf{v}_1
	~~\quad \mbox{and}~~\quad
	\widetilde{C}^\top_k\widetilde{C}_k \mconv
	\mathbf{v}^\top_1\mathbf{v}_1
	$$
	hold in view of Theorem \ref{thm1}, by~\eqref{eq:ksi:conditiona:variation} we have
	\[
	\ee [  \mds_k ^\top \mds_k |\FF_{k-1} ]  =
	\Lambda U^\top  \Gamma_{k-1} U\Lambda
	\mconv
	\Lambda U^\top  \Gamma  U\Lambda
	\]
	and hence
	\begin{equation}
	\label{eq:gamma:mconv:pf}
	\lambda_i \lambda_j \mathbf{u}_i^\top \Gamma_{k} \mathbf{u}_j
	\pconv \lambda_i \lambda_j \mathbf{u}_i^\top \Gamma \mathbf{u}_j
	~~\mbox{as $k \to \infty$.}
	\end{equation}

	As both $ \trf_{n, k} $ and $\Lambda$ are diagonal matrices,
	we have
	\begin{align}
	\frac{1}{n} & \sum_{i=k}^n \mathbf{e}_i
	\trf_{n,k}  (\Lambda U^\top  \Gamma_{k-1} U\Lambda)   \trf_{n,k}
	\ \mathbf{e}^\top_j
	= \frac{1}{n} \sum_{k=1}^n   b_{n,k}(i) b_{n,k}(j) \mathbf{e}_i\Lambda U^\top  \Gamma_{k-1} U\Lambda\mathbf{e}^\top_j    \nonumber \\
	& \quad = \frac{\lambda_i \lambda_j}{n}
	\sum_{k=1}^n   b_{n,k}(i) b_{n,k}(j)  \mathbf{u}_i^\top \Gamma_{k-1} \mathbf{u}_j
	\pconv
	\frac{s\lambda_i \lambda_j \mathbf{u}_i^\top \Gamma \mathbf{u}_j }{s- \lambda_i-\lambda_j} ,
	\label{eq:pf:clt:var:pconv}
	\end{align}
	where the convergence follows from Corollary~\ref{cor:conv:array:product} and ~\eqref{eq:gamma:mconv:pf}.

	\bigskip
	
	Since $S_{n,n}$ is a mean $\bzero$ random vector and $\trf_{n,k}$ is a diagonal matrix, we have
	\begin{align*}
	\var\left[S_{n,n} \right]
	&=\ee[S^\top_{n,n}S_{n,n}]
	=  \frac{1}{n}   \sum_{k,\ell=1}^n    \trf^\top_{n,k} \ee [  \mds_k ^\top \mds_\ell ]   \trf_{n,\ell}
	=\frac{1}{n}   \sum_{k=1}^n    \trf_{n,k} \ee [  \mds_k ^\top \mds_k ]
	\trf_{n,k} \\
	&= \sum_{k=1}^n \ee[X^\top_{n,k} X_{n,k}]
	=\ee[\Phi(n)]
	\end{align*}
	where the third equality follows from~\eqref{eq:M:main}.
	Furthermore, an argument similar to the proof of~\eqref{eq:pconv:xcv:phi} shows that
	\begin{equation}
	\lim_{n\to \infty} \var(S_{n,n}) = \widetilde{\Sigma}.
	\nonumber
	\end{equation}
	Therefore $\widetilde{\Sigma}$ is positive semi-definite because the matrix $\var(S_{n,n})$ is necessarily positive semi-definite for each $n\ge 1$.

	Following the Cram\'er-Wold device for multivariate central limit theorem
	(see, e.g.~\citet[Theorem 3.10.6]{durrett2019probability}), fix an arbitrary row vector  $\cwt=(w_1,\cdots,w_{\udim})$ in $\mathbb{R}^{\udim}\setminus \{\bzero\}$ and put $s_{n,k}=S_{n,k} \cwt^\top$ and $x_{n,k}=X_{n,k}\cwt^\top$. Furthermore,
	since the matrix $\widetilde{\Sigma}$ is positive semi-definite, we can introduce $\sigma^2:=\cwt\, \widetilde{\Sigma}\, \cwt^\top \ge 0$. Then for establishing~\eqref{eq:S:normal} it suffices to show that
	\begin{equation}
	\label{eq:local:conv:cw}
	s_{n,n} \wconv N(0, \sigma^2).
	\end{equation}
	Since $\{x_{n,k}\}_{1\le k \le n}$ is a martingale difference sequence and  $\{s_{n,k}\}_{1\leq k \leq n}  $ is an array of mean zero martingale,  the martingale central limit theorem (see, e.g.~\citet[ Corollary 3.2]{hall2014martingale}) implies that~\eqref{eq:local:conv:cw} follows from
	\begin{equation}
	\label{eq:local:variance:mclt}
	\gamma_n:= \sum_{k=1}^{n} \ee\left [ \left | x_{n,k} \right | ^2
	\vert \FF_{k-1} \right] \pconv \sigma^2
	~~\quad~\mbox{as $n\to \infty$}
	\end{equation}
	and   the conditional Lindeberg-type  condition holds, that is, for every  $\epsilon >0$
	\begin{equation}
	\label{eq:local:Lindeberg:mclt}
	\gamma^*_n:=\sum_{k=1}^n \ee\left [ \left | x_{n,k} \right | ^2
	\mathbb{I}_{A_{n,k,\epsilon}} \vert \FF_{k-1} \right] \pconv 0
	~~\quad~\mbox{as $n\to \infty$}
	\end{equation}
	where $\mathbb{I}_{A_{n,k,\epsilon}}$ is the indicator variable on $A_{n,k,\epsilon}: = \{ |x_{n,k}| > \epsilon\}$.

	Now~\eqref{eq:local:variance:mclt} follows from
	\begin{align}
	\gamma_n & =  \sum_{k=1}^{n} \ee\left [\cwt X_{n,k}^\top X_{n,k}\cwt^\top |\FF_{k-1}
	\right]
	= \cwt \sum_{k=1}^{n} \ee\left [ X_{n,k}^\top X_{n,k} |\FF_{k-1}
	\right] \cwt^\top \nonumber \\
	&= \cwt \Phi_n  \cwt^\top
	\pconv \cwt\, \widetilde{\Sigma}\, \cwt^\top=
	\sigma^2,
	\label{eq:var:conv:gamma}
	\end{align}
	where the convergence follows from~\eqref{eq:pconv:xcv:phi}.

	\bigskip

	To see that~\eqref{eq:local:Lindeberg:mclt} holds, by~\eqref{eq:tau:mds} we have 
	$$
	X_{n,k}=\sum_{j=1}^{\udim} X_{n,k}\mathbf{e}^\top_j \mathbf{e}_j=\sum_{j=1}^{\udim} n^{-1/2} \lambda_j\,b_{n, k}(j) \tau_k \bu_j \mathbf{e}_j,
	~~\quad~\mbox{$1\le k \le n$}.
	$$
	In particular, we have $X_{n,k}(1)=0$ because $\tau_k \bu_1=0$ holds for $k\ge 1$ in view of~\eqref{eq:mds:first:col}. Consequently, we have
	\begin{equation}
	\label{eq:x:def}
	x_{n,k}=X_{n,k} \cwt^\top=\sum_{j=2}^{\udim} n^{-1/2} w_j \lambda_j b_{n, k}(j) \tau_k \bu_j.
	\end{equation}
	Putting $\rho=\lambda_2/s$,  then $\lambda_j/s\le \rho<1/2$ holds for $2\le j \le \udim$ in view of (A2) and (A4). Furthermore, there exists a constant $K_0>0$ independent of $n$ and $k$ such that
	\begin{equation}
	\label{eq:x:abs}
	|x_{n,k}| \leq 
	\sum_{j=2}^{\udim} n^{-1/2} |w_j \lambda_j \tau_k \bu_j| |b_{n, k}(j)|
	\leq K_0 n^{-1/2}(n/k)^{\rho}
	\leq K_0n^{-1/2}\max(1, n^{\rho})
	\end{equation}
	holds for $1\le k \le n$. Here the second inequality follows from Lemma~\ref{lem:entries:B}  and the fact that   $| w_j \lambda_j \tau_k \mathbf{u}_j|$ is  bounded above by a constant independent of $k$. 
	The last inequality follows from the fact that
	$(n/k)^{\rho} \le \max \big( (n/1)^{\rho},(n/n)^{\rho} \big)
	$.
	Now let $A'_{n,\epsilon}: = \{ 
	K_0n^{-1/2}\max(1, n^{\rho})> \epsilon\}$, which it is either $\emptyset$ if $n$ is sufficient large or the whole probability space otherwise. Then by~\eqref{eq:x:abs} we have $A_{n,k,\epsilon}\subseteq A'_{n,\epsilon}$ and hence for all $\epsilon>0$ and each $n$, we have $\mathbb{I}_{A_{n,k,\epsilon}}\le \mathbb{I}_{A'_{n,\epsilon}}$ for all $1\le k \le n$. Furthermore, since $\rho<1/2$ and $K_0>0$, we have
	\begin{equation}
	\label{eq:conv:events:epsilon}
	\ee[\mathbb{I}_{A'_{n,\epsilon}}]=\pp(A'_{n,\epsilon}) \to 0~~\mbox{as $n \to \infty$}.
	\end{equation}
	Consequently, we have
	\begin{align}
	\ee[\gamma^*_n] &= \ee\Big[ \sum_{k=1}^{n}
	\ee\big[ \left | x_{n,k} \right | ^2  \mathbb{I}_{A_{n,k,\epsilon}}
	\vert \FF_{k-1} \big]  \Big]
	\leq  \ee\Big[ \sum_{k=1}^{n} \ee\big[ \left | x_{n,k} \right | ^2 \mathbb{I}_{A'_{n,\epsilon}} \vert \FF_{k-1} \big]  \Big] \\
	&=  \ee\Big[ \Big( \sum_{k=1}^{n} \ee\big[ \left | x_{n,k} \right | ^2  \vert \FF_{k-1} \big] \Big) \mathbb{I}_{A'_{n,\epsilon}}  \Big]
	=\ee\left[  \gamma_n\mathbb{I}_{A'_{n,\epsilon}} \right]
	\\
	&=  \ee\big[  \gamma_n \big]  \ee\big[ \mathbb{I}_{A'_{n,\epsilon}}\big]
	\to 
	0,~~\mbox{as $n\to \infty$}
	\end{align}
	where we have used the fact that $\mathbb{I}_{A'_{n,\epsilon}}$ is $\FF_{n}$-measurable and independent of $\FF_{n}$ (and all its sub-sigma-algebras); the convergence follows from~\eqref{eq:var:conv:gamma}~and~\eqref{eq:conv:events:epsilon}. Since $\gamma^*_n$ is almost surely non-negative, this completes the proof of~\eqref{eq:local:Lindeberg:mclt}, the last step in the proof of the  theorem.
	\epf
\end{proof}

\section{Discussion}
\label{sec:disc}


Inspired by a martingale approach developed in~\citet{BaiHu2005},  we present 
in this paper the strong law of large numbers and the central limit theorem for a family of the P\'olya urn models in which negative off-diagonal entries are allowed in their replacement matrices. 
This leads to a unified approach to proving corresponding limit theorems for the joint vector of cherry and pitchfork counts under the YHK and the PDA models, namely, the joint random variable converges almost surely to a deterministic vector and converges in distribution to a bivariate normal distribution. 
Interestingly, such convergence results also hold for unrooted tees and do not depend on the initial trees used in the generating process.

The results presented here also lead to several broad directions that may be interesting to explore in future work. The first direction  concerns a more detailed analysis on convergence. For instance, the central limit theorems present here should be extendable to a functional central limit theorem, a follow-up project that we will pursue. Furthermore, it remains to establish the rate of convergence for the limit theorems. For example, a law of the iterated logarithm would add considerable information to the strong law of large numbers by providing a more precise estimate of the size of the almost sure fluctuations of the random sequences in Theorems~\ref{thm:yhk:color} and ~\ref{thm:pda:color}. 




The second direction concerns whether the results obtained here can  be extended to other tree statistics and tree models. For example,  the two tree models considered here, the YHK and the PDA, can be regarded as special cases of some more general tree generating models, such as Ford’s alpha model (see, e.g.~\citet{chen2009new}) and the Aldous beta-splitting model (see, e.g.~\citet{aldous96a}). Therefore, it is of interest to extend our studies on subtree indices to these two models as well. Furthermore, instead of cherry and pitchfork statistics, we can consider more general subtree indices such as $k$-pronged nodes and $k$-caterpillars~\citep{rosenberg06a,chang2010limit}.


Finally, it would be interesting to study tree shape statistics for several recently proposed  graphical structures in evolutionary biology. For instances, one can consider aspects of tree shapes that are related to the distribution of branch lengths \citep{ferretti2017decomposing,arbisser2018joint} or relatively ranked tree shapes~\citep{kim2020distance}. 
Furthermore, less is known about shape statistics in phylogenetic networks, in which non-tree-like signals such as lateral gene transfer and viral recombinations are accommodated~\citep{bouvel2020counting}. Further understanding of their statistical properties could  help us design more complex evolutionary models that may in some cases provide a better framework for understanding real datasets. 





\begin{acknowledgements}
K.P. Choi acknowledges the support of Singapore Ministry of Education Academic Research Fund R-155-000-188-114. 
The work of Gursharn Kaur was supported by NUS Research Grant R-155-000-198-114.
We thank the Institute for Mathematical Sciences, National University of Singapore where this project started during the discussions in the \emph{Symposium in Memory of Charles Stein}.

\end{acknowledgements}

%
%

\section*{Appendix}
In the appendix we present a proof of Lemma~\ref{lem:entries:B}
concerning bounds on the entries of
$\trf_{n,k}$. 
To this end, we start with the following observation.

\begin{lemma}
	\label{lem1}
	For $\lambda \in \mathbb{R}$, $\ell \in \mathbb{R}_{> 0}$, and two non-negative integers $m$ and $n$ with $n\ge m$, put 
	$$
	F_m^m(\ell, \lambda)=1,
	~\quad~
	\mbox{and}
	~\quad~
	F_m^n(\ell, \lambda):= \prod_{i=m}^{n-1} \left( 1+\frac{\lambda}{\ell + i} \right)
	~\quad~
	\mbox{for $n>m$.}
	$$   Then we have
	\begin{equation}
	\label{eq:lem1:limit}
	\lim_{m \to \infty} 
	\sup_{n\ge m}
	\left(\frac{m}{n} \right)^{\lambda} F_m^n(\ell, \lambda) =1.
	\end{equation}
	Furthermore,  there exists a positive constant $K=K(\lambda,\ell)$  such that
	\begin{equation}
	\label{eq:lem1:bound}
	\left|F_m^n(\ell, \lambda)  \right|   \le
	K \left(n/m\right)^\lambda
	~~\mbox{for all $1\le m\le n$.}
	\end{equation}
\end{lemma}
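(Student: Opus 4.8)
The plan is to pass to logarithms and compare $\log F_m^n(\ell,\lambda)$ with $\lambda\log(n/m)$. Put $g(i)=\log\bigl(1+\tfrac{\lambda}{\ell+i}\bigr)$, and let $m_0=m_0(\lambda,\ell)$ be the least integer with $\ell+m_0\ge 2|\lambda|$ (if $\lambda=0$ the lemma is trivial, so assume $\lambda\ne 0$). For $i\ge m_0$ each factor $1+\tfrac{\lambda}{\ell+i}$ lies in $[\tfrac12,\tfrac32]$, in particular it is positive, and the elementary bound $|\log(1+x)-x|\le x^2$ valid for $|x|\le\tfrac12$ gives $g(i)=\tfrac{\lambda}{\ell+i}+r_i$ with $|r_i|\le \lambda^2/(\ell+i)^2$. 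Hence, for all $m\ge m_0$ and $n\ge m$,
\[
\log F_m^n(\ell,\lambda)=\lambda\sum_{i=m}^{n-1}\frac{1}{\ell+i}+\sum_{i=m}^{n-1}r_i .
\]
The two auxiliary estimates I will invoke are the integral comparison for the decreasing function $x\mapsto(\ell+x)^{-1}$, namely $0\le\sum_{i=m}^{n-1}\tfrac{1}{\ell+i}-\log\tfrac{\ell+n}{\ell+m}\le\tfrac{1}{\ell+m}$, and the identity $\log\tfrac{\ell+n}{\ell+m}-\log\tfrac nm=\log\tfrac{1+\ell/n}{1+\ell/m}$, whose modulus is at most $\log(1+\ell/m)$ because $0<\ell/n\le\ell/m$.

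Combining these, for $m\ge m_0$ and every $n\ge m$ one obtains $\bigl|\log F_m^n(\ell,\lambda)-\lambda\log(n/m)\bigr|\le\eta(m)$, where
\[
\eta(m):=|\lambda|\Bigl(\tfrac{1}{\ell+m}+\log(1+\ell/m)\Bigr)+\lambda^2\sum_{i\ge m}\frac{1}{(\ell+i)^2}.
\]
The point is that $\eta(m)$ is independent of $n$ and tends to $0$ as $m\to\infty$, each of its summands being $1/(\ell+m)$, $\log(1+\ell/m)$, or the tail of the convergent series $\sum(\ell+i)^{-2}$. Since $F_m^m(\ell,\lambda)=1$ forces the $n=m$ term of $(m/n)^{\lambda}F_m^n(\ell,\lambda)$ to equal $1$, while every term with $n\ge m$ lies in $[e^{-\eta(m)},e^{\eta(m)}]$, the supremum $\sup_{n\ge m}(m/n)^{\lambda}F_m^n(\ell,\lambda)$ is squeezed between $1$ and $e^{\eta(m)}$; letting $m\to\infty$ yields \eqref{eq:lem1:limit}.

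For \eqref{eq:lem1:bound} the same estimate already gives $|F_m^n(\ell,\lambda)|\le e^{\eta(m_0)}(n/m)^{\lambda}$ whenever $m\ge m_0$ and $n\ge m$. The finitely many remaining cases $1\le m<m_0$ I will reduce to this one via the multiplicativity $F_m^n=F_m^{p}\,F_{p}^{n}$ with $p=\min(n,m_0)$: the factor $F_m^{p}$ is a product of at most $m_0$ terms, each of modulus at most $\max_{0\le i<m_0}|1+\lambda/(\ell+i)|$, so it is bounded by a constant depending only on $\lambda$ and $\ell$; the factor $F_p^n$ is either empty or already covered, hence at most $e^{\eta(m_0)}(n/m_0)^{\lambda}$; and $(n/m_0)^{\lambda}\le\bigl(\max_{1\le m<m_0}(m/m_0)^{\lambda}\bigr)(n/m)^{\lambda}$ because $m$ ranges over a finite set. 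Absorbing all these constants into a single $K=K(\lambda,\ell)$ proves \eqref{eq:lem1:bound}.

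The only point needing genuine care, and the place where a naive argument could go wrong, is keeping every error estimate uniform in $n\ge m$; this succeeds precisely because each error term reduces to an $n$-free quantity ($1/(\ell+m)$, $\log(1+\ell/m)$, or a tail of $\sum(\ell+i)^{-2}$). A minor secondary nuisance is that $F_m^n$ may be zero or negative for small $m$ when $\lambda<0$ and $\ell+i=-\lambda$ for some $i<m_0$; this is why \eqref{eq:lem1:bound} is stated in terms of $|F_m^n|$ and why the range $1\le m<m_0$ is dispatched by the crude factorisation rather than by the logarithmic estimate.
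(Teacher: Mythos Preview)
Your argument is correct and takes a genuinely different route from the paper's. The paper expresses $F_m^n(\ell,\lambda)$ in closed form as a ratio of Gamma functions, $F_m^n=\dfrac{\Gamma(n+\ell+\lambda)\Gamma(m+\ell)}{\Gamma(n+\ell)\Gamma(m+\ell+\lambda)}$, and then invokes the asymptotic $\Gamma(x+y)/(x^{y}\Gamma(x))\to 1$; writing $G_{m,k}=\Gamma(m+k+\ell+\lambda)/\bigl((m+k)^{\lambda}\Gamma(m+k+\ell)\bigr)$ one gets $(m/n)^{\lambda}F_m^n=G_{m,n-m}/G_{m,0}$, and the uniformity in $n$ is extracted from the fact that $G_{m,k}=G_{m+k,0}$. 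Your approach bypasses special functions entirely: you take logarithms, expand each $\log(1+\lambda/(\ell+i))$ to first order with a quadratic remainder, and reduce everything to an integral comparison for $\sum 1/(\ell+i)$ plus the tail of $\sum 1/(\ell+i)^2$. This is more elementary and makes the source of the $n$-uniformity completely explicit via the single function $\eta(m)$. The paper's approach has the virtue of yielding an exact formula for $F_m^n$, which is aesthetically pleasing but not needed here.

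One small quibble: in your treatment of the finitely many cases $1\le m<m_0$, the sentence ``the factor $F_p^n$ is either empty or already covered, hence at most $e^{\eta(m_0)}(n/m_0)^{\lambda}$'' is not quite right when $p=n<m_0$, since then $F_p^n=1$ but $e^{\eta(m_0)}(n/m_0)^{\lambda}$ can be strictly below $1$ (e.g.\ when $\lambda>0$). This is harmless, because for $1\le m\le n\le m_0$ there are only finitely many pairs $(m,n)$ and each ratio $|F_m^n|/(n/m)^{\lambda}$ is a finite positive number; taking the maximum over this finite set furnishes the required constant. You clearly have this in mind, but the write-up would be cleaner if you simply separate the case $n\le m_0$ at the outset and dispose of it by finiteness, rather than threading it through the factorisation.
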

\begin{proof}
	
	Since  the lemma holds for $\lambda=0$ in view of $F_m^n(\ell,0)=1$, we will assume that $\lambda\not = 0 $ in the remainder of the proof.  
	For simplicity, put $L:=\max\big(1,-(\ell+\lambda)\big)$.

	First we shall establish~\eqref{eq:lem1:limit}. To this end, we may assume $m>L$, and hence $m+\ell+\lambda>0$. 
	Furthermore,  recall the following result on the ratio of gamma functions
	(see, e.g.~\citep[P.398]{jameson2013inequalities}
	  for a proof for the case $y>0$, which can be easily extended to the other case $y\le 0$): for a fixed number $y\in \mathbb{R}$, we have 
	\begin{equation}
	\label{eq:gamma:ratio:limit}
	\lim_{x\to \infty}  \frac{\Gamma(x+y)}{x^y\Gamma(x)}=1.
	\end{equation}
	Therefore, putting
	\begin{equation}
	G_{m,k}:=\frac{\Gamma(m+k+\ell+\lambda)}{(m+k)^\lambda
		\Gamma(m+k+\ell)}
	~\quad~\mbox{for integer $k\ge 0$},
	\nonumber
	\end{equation}
	then	we have
	\begin{equation}
	\label{eq:gamma:sup:ratio:limit:1}
	\lim_{m\to \infty}  \ln \big(G_{m,0}\big) =0,
	~\quad~\mbox{and hence}~\quad~
	\lim_{m\to \infty} \sup_{k\ge 0} \ln \big(G_{m+k,0}\big)=0.
	\end{equation}
	Here the second limit holds because the limit of $\ln \big(G_{m,0}\big)$ being $0$ implies that its limit superior is also $0$. 
	Together with $G_{m,k}=G_{m+k,0}$ for  $k\ge 0$, this leads to 
	\begin{equation}
	\label{eq:gamma:sup:ratio:limit:2}
	\lim_{m\to \infty} \sup_{k\ge 0} \ln \big(G_{m,k}\big) 
	=	\lim_{m\to \infty} \sup_{k\ge 0} \ln \big(G_{m+k,0}\big) 
	=0.
	\end{equation}
	Since 
	\begin{align}
	\left(\frac{m}{m+k} \right)^{\lambda} F_m^{m+k}(\ell, \lambda)
	&=
	\left(\frac{m}{m+k} \right)^{\lambda}  \frac{ \Gamma(m+k+\ell+\lambda)\Gamma(m+\ell)}{ \Gamma(m+k+\ell)\Gamma(m+\ell+\lambda)} 
	=\frac{G_{m,k}}{G_{m,0}}
	\end{align}
	holds for each integer $k\ge 0$, we have
	\begin{eqnarray*}
		\lim_{m \to \infty} \sup_{n\ge m} \ln \Big(\left(\frac{m}{n} \right)^{\lambda} F_m^n(\ell, \lambda) \Big)
		&=&
		\lim_{m \to \infty} \sup_{k\ge 0} \ln \Big(\Big(\frac{m}{m+k} \Big)^{\lambda} F_m^{m+k}(\ell, \lambda) \Big)\ \\
		&=&\lim_{m \to \infty} \sup_{k\ge 0} \Big(\ln(G_{m,k})-\ln(G_{m,0})\Big) \\
		&=& \lim_{m \to \infty} \sup_{k\ge 0} \ln(G_{m,k})- \lim_{m \to \infty} \ln(G_{m,0})\\
		&=&0,
	\end{eqnarray*}
	where the last equality follows from ~\eqref{eq:gamma:sup:ratio:limit:1} and
	~\eqref{eq:gamma:sup:ratio:limit:2}. This completes the proof of~\eqref{eq:lem1:limit}.
	
	\bigskip
	
	Next, we shall establish~\eqref{eq:lem1:bound}. To this end we 
	assume $m<n$, $m+\ell+\lambda\not =0$, and $n-1+\ell+\lambda\not =0$  as otherwise it clearly holds. Now consider  the following three cases:
	
	\medskip
	\noindent
	Case 1:  $1\le m \le n-1 < L$, and hence $n-1+\ell+\lambda < 0$. Let $A=\{(\alpha,\beta)\,| \alpha, \beta \in \mathbb{N}; 1\le \alpha \le \beta \le1 -\ell-\lambda\}$ 
	be the  finite subset of $\mathbb{N}\times \mathbb{N}$ whose size depends on $\ell$ and $\lambda$, and consider the constant
	$$K_1:=\max_{(\alpha,\beta)\in A} \big\{|F_\alpha^\beta(\ell,\lambda)|
	\left(
	\alpha/\beta
	\right)^\lambda\big\}.
	$$ Since $(m,n)\in A$, it follows that
	$|F_m^n(\ell,\lambda)|\le K_1(n/m)^\lambda$ holds.
	
	\noindent
	Case 2:  $m\ge L$ and hence $m+\ell+\lambda >0$. Note that in this case we have $F_m^n(\ell,\lambda)>0$. Furthermore, an argument similar to the proof of ~\eqref{eq:lem1:limit} shows that for each $m\ge L$ we have
	$$
	\lim_{n \to \infty} 
	\left(\frac{m}{n} \right)^{\lambda} F_m^n(\ell, \lambda) =\frac{1}{G_{m,0}},
	$$
	and hence there exists a constant $K'_m$ depending on $m, \ell,\lambda$ so that 
	$F_m^n(\ell,\lambda)\le K'_m (n/m)^\lambda$
	holds. Furthermore, by~\eqref{eq:lem1:limit} it follows that there exists a constant $M=M(\ell,\lambda)$ and a constant $K_0=K_0(\ell,\lambda)$ so that 
	$F_m^n(\ell,\lambda)\le K_0 (n/m)^\lambda$	holds for all $m> M$. Therefore, for  the constant
	$$
	K_2:=\max\{K_0,K'_1,\cdots,K'_{M} \},
	$$
	which depends only on $\ell$ and $\lambda$, we have 
	$F_m^n(\ell,\lambda)\le K_2 (n/m)^\lambda$
	for all $L\le m \le n$.

	\medskip
	\noindent
	Case 3:  $1\le m<L<n-1$ and hence $m+\ell+\lambda <0<n-1+\ell+\lambda$.
	Note this implies $L>1$ and we may further assume that $L$ is not an integer as otherwise $F_m^n(\ell,\lambda)=0$ follows.  Let $p$ be the (necessarily positive) largest integer less than $L$. Then $1\le m\le p<L$ and by  Case 1 we have $|F_m^p(\ell,\lambda)|\le K_1(p/m)^\lambda$. Furthermore, as
	$p+1>L$, by Case 2 we have $|F_{p+1}^n(\ell,\lambda)|\le K_2(n/p+1)^\lambda$.
	Therefore, considering the constant $K_3=\max\{K_1K_2,2^{-\lambda} K_1K_2\}$, which depends on only $\ell$ and $\lambda$,  we have
	\begin{eqnarray*}
		|F_m^n(\ell,\lambda)| &=& |F_m^p(\ell,\lambda) F_{p+1}^n(\ell,\lambda)| 
		\le  K_1K_2 \left(\frac{p}{m}\right)^\lambda \left(\frac{n}{p+1}\right)^\lambda
		\\&=& 
		K_1K_2 \left(\frac{p}{p+1}\right)^\lambda \left(\frac{n}{m}\right)^\lambda 
		\le K_3 \left(\frac{n}{m}\right)^\lambda.
	\end{eqnarray*}
	The last inequality follows since  
	$(\frac{p}{p+1})^\lambda\le 1$ holds for $\lambda>0$, and 
	$(\frac{p}{p+1})^\lambda\le 2^{-\lambda}$ for  $\lambda<0$. 
	\epf
\end{proof}

\vspace{1em}

With Lemma~\ref{lem1}, we now present a proof of Lemma~\ref{lem:entries:B}. 


\noindent
{\bf Proof of~Lemma~\ref{lem:entries:B}.}
	Recall that by (A3) we have $t_{\ell}=t_0+\ell s$ for $\ell\ge 1$, 
	and hence 
	$$
	b_{n, k}(j) = \prod_{\ell=k}^{n-1} \Big(1+ \frac{\lambda_j}{t_0+\ell s} \Big)
	=\prod_{\ell=k}^{n-1} \Big( 1+ \frac{\lambda_j/s}{(t_0/s)+\ell} \Big)
	=F_k^n\Big( \frac{t_0}{s},\frac{\lambda_j}{s} \Big)
	$$
	holds for $1\le j \le \udim$ and $1\le k < n$.
	Noting that $b_{n,n}(j)=1$,  by Lemma~\ref{lem1} there exists a constant $K'_j$ such that 
	$|b_{n,k}(j)|\le K'_j(n/k)^{\lambda_j/s}$ holds for $1\le k \le n$. Now let
	$a_j=1+(\lambda_j/t_0)$ and put $K_j=\max(K'_j,K'_j|a_j|)$. Then we have 	$|b_{n,0}(j)|\le Kn^{\lambda_j/s}$ in view of $b_{n,0}(j)=a_jb_{n,1}(j)$. This establishes ~\eqref{eq:b:upper:bound} by choosing $K=\max(K_1,\cdots,K_\udim)$.

	Next, we shall show~\eqref{eq:b:second:moment}. To this end, fix a pair of indices $2\le i \le j \le \udim$, and put $\rho_i=\lambda_i/s$ and $\rho_j=\lambda_j/s$. 
	Then by (A2) we have $\rho:=\rho_i+\rho_j<1$ and $1-\rho=(s-\lambda_i-\lambda_j)/s$. Furthermore, consider 
	$$
	\Delta_{n}:=\frac{1}{n}\sum_{k=1}^n \left( \frac{n}{k} \right)^{\rho_j}  
	\Big(
	b_{n,k}(i)-\left( \frac{n}{k} \right)^{\rho_i}
	\Big)
	~~
	\mbox{and}
	~~
	\Delta^*_{n}:=\frac{1}{n}\sum_{k=1}^n b_{n,k}(i) 
	\Big(
	b_{n,k}(j)-\left( \frac{n}{k} \right)^{\rho_j}
	\Big).
	$$	
	Then we have 
	$$
	\Delta_{n}+\Delta^*_{n}= \frac{1}{n}\sum_{k=1}^n 
	\Big( 
	b_{n,k}(i)b_{n,k}(j)-\left( \frac{n}{k} \right)^{\rho}
	\Big). 
	$$
	By~\eqref{eq:Rieman:sum:bound} it suffices to show that both $\Delta_{n}\to 0$ and $\Delta^*_{n} \to 0$ as $n\to \infty$.
	
	By~\eqref{eq:b:upper:bound}, we have $|b_{n,k}(i)| \le K(n/k)^{\rho_i}$ and $|b_{n,k}(j)| \le K(n/k)^{\rho_j}$. We shall first show that $\Delta_{n} \to 0$ as $n\to \infty$. To this end, let
	$$
	H_{n,k}:=\left( \frac{n}{k} \right)^{\rho_j}  
	\Big(
	b_{n,k}(i)-\left( \frac{n}{k} \right)^{\rho_i}
	\Big)
	=\left( \frac{n}{k} \right)^{\rho}  
	\Big(
	\Big( \frac{k}{n} \Big)^{\rho_i} b_{n,k}(i)-1
	\Big).
	$$
	Then we have $|H_{n,k}|\le (K+1) (n/k)^{\rho}$ for all $1\le k \le n$.
	Consider $\epsilon>0$. Then it follows that $\epsilon':=\frac{\epsilon(1-\rho)}{2(2-\rho)}>0$.  By~\eqref{eq:lem1:limit} in Lemma~\ref{lem1}, we have
	\begin{equation}
	\lim_{k \to \infty} 
	\sup_{n\ge k}
	\left(\frac{k}{n} \right)^{\rho_i} b_{n,k}(i) =
	\lim_{k \to \infty} 
	\sup_{n\ge k}
	\left(\frac{k}{n} \right)^{\rho_i} F_k^n\Big(\frac{t_0}{s}, \rho_i\Big) =1.
	\nonumber
	\end{equation}
	Therefore, there exists a constant $M$ such that 
	$|H_{n,k}|\le \epsilon' (n/k)^{\rho}$ holds for all $n\ge k \ge M$. 
	Moreover, let $N$ be the smallest integer greater than $M$ so that 
	$N>[2(K+1)M/\epsilon]^{1/(1-\rho)}$ and $N>M[2(K+1)/\epsilon]^{1/(1-\rho)}$ both hold. 
	Then for $n>N$ we have 
	\begin{align*}
	\frac{1}{n}\sum_{k=1}^n |H_{n,k}| 
	&
	\le \frac{\epsilon'}{n}\sum_{k=M+1}^n \left( \frac{n}{k} \right)^{\rho}
	+\frac{1}{n}\sum_{k=1}^{M} |H_{n,k}| 
	\le 
	\frac{\epsilon'}{n}\sum_{k=1}^n \left( \frac{n}{k} \right)^{\rho}
	+\frac{1}{n}\sum_{k=1}^{M} |H_{n,k}| 
	\\&
	\le \frac{\epsilon'(2-\rho)}{1-\rho}+\frac{K+1}{n^{1-\rho}}\sum_{k=1}^{M} \left( \frac{1}{k} \right)^{\rho}
	\le \frac{\epsilon}{2}+\frac{\epsilon}{2}
	= \epsilon,
	\end{align*}
	where in the third inequality we use the fact that~\eqref{eq:Rieman:sum:bound} implies  $$
	\frac{1}{n}\sum_{k=1}^n \left( \frac{n}{k} \right)^{\rho}\le \frac{1}{n}+\frac{1}{1-\rho}\le 1+\frac{1}{1-\rho}=\frac{2-\rho}{1-\rho}.
	$$
	Therefore it follows that $\Delta_{n} \to 0 $ as $n \to \infty$. 
	Since $|b_{n,k}(i)| \le K(n/k)^{\rho_i}$, a similar argument can be adopted to show that $\Delta^*_{n} \to 0 $ as $n \to \infty$,   completing the proof of Lemma~\ref{lem:entries:B}.
	\epf

\bibliographystyle{spbasic}      
\bibliography{0_StatPhyloTree.bib}

\end{document}